\tikzset{
    labl/.style={anchor=south, rotate=90, inner sep=.5mm}
}
\newtheorem{thm}{Theorem}
\newtheorem{theorem}[thm]{Theorem}
\newtheorem{thmintro}{Theorem}
\newtheorem{proposition}[thm]{Proposition}
\newtheorem{lemma}[thm]{Lemma}
\newtheorem{corollary}[thm]{Corollary}
\theoremstyle{definition}
\newtheorem{definition}[thm]{Definition}
\newtheorem{remarks}[thm]{Remarks}
\numberwithin{equation}{section}
\numberwithin{thm}{section}
\newcommand{\qbinom}[2]{\genfrac{[}{]}{0pt}{0}{#1}{#2}}
\newcommand{\I}{I}
\newcommand{\coroot}{\alpha^\vee}
\newcommand{\A}{\mathcal{A}}
\newcommand{\U}{\mathrm{U}}
\newcommand{\AU}{{_\mathcal{A}\dot{\U}}}
\newcommand{\one}{\mathbf{1}}
\newcommand{\B}{\mathrm{B}}
\def\lr#1#2{\ensuremath{\left(\kern-.3em\left(\genfrac{}{}{0pt}{}{#1}{#2}\right)\kern-.3em\right)}}
\newcommand{\lie}{\text{Lie}}
\newcommand{\T}{\texttt T}
\newcommand{\Qq}{\mathbb{Q}(q)}
\newcommand{\WD}{\widehat{\Delta}}
\newcommand{\cc}{\mathbf{c}}
\newcommand{\G}{\mathbf{G}}
\newcommand{\BC}{\ensuremath{\mathbb {C}}\xspace}
\newcommand{{\BG}}{\ensuremath{\mathbb {G}}\xspace}
\newcommand{{\BK}}{\ensuremath{\mathbb {K}}\xspace}
\newcommand{\BN}{\ensuremath{\mathbb {N}}\xspace}
\newcommand{\BQ}{\ensuremath{\mathbb {Q}}\xspace}
\newcommand{\BZ}{\ensuremath{\mathbb {Z}}\xspace}
\newcommand{\RB}{\ensuremath{\mathrm {B}}\xspace}
\newcommand{\RO}{\ensuremath{\mathrm {O}}\xspace}
\begin{document}

\title[]{Symmetric subgroup schemes}

\author[Jinfeng Song]{Jinfeng Song}
\address{Department of Mathematics, The Hong Kong University of Science and Technology, Clear Water Bay, Hong Kong SAR.}
\email{jfsong@ust.hk}

\subjclass[2020]{20G99, 17B37}

\begin{abstract}
Chevalley group schemes are group schemes defined over the integers that parametrize connected reductive groups over algebraically closed fields as geometric fibers. In this paper, we construct closed subgroup schemes of Chevalley group schemes that parametrize symmetric subgroups of reductive groups as geometric fibers. Our construction relies crucially on the theory of quantum symmetric pairs and thus naturally admits a quantization. At the quantum level, this leads to the construction of coisotropic quantum right subgroups of the quantized function algebras of reductive groups.
\end{abstract}

\maketitle

\section{Introduction}

\subsection{Chevalley groups and quantum groups}
		
Let $G_{\mathbb{C}}$ be a connected reductive algebraic group over the complex numbers. We denote by $\RO_\BC$ the coordinate ring of $G_{\mathbb{C}}$.  In his famous papers \cites{Ch55, Ch61}, Chevalley constructed an integral form $\RO_\BZ$ of $\RO_\BC$ such that $\RO_\BC = \RO_\BZ \otimes_{\BZ} \BC$.   The integral form defines the \emph{Chevalley group scheme}  $\G_\BZ= Spec\, \RO_\mathbb{Z}$  over $\BZ$ such that the geometric fiber $G_k\footnote{We identify an algebraic variety over $k$ with its set of $k$-rational points.} = \G_{\BZ} \times_{{Spec\, \BZ}} {Spec\, } k$ is the connected reductive group associated with the given root datum for any algebraically closed field $k$.

Chevalley's approach depends on the choice of representations of $G_{\mathbb{C}}$. Kostant \cite{Ko66} identified (without proof) $\RO_\BZ$ intrinsically as the restricted dual of the Kostant's $\BZ$-form of the enveloping algebra of the Lie algebra of $G_{\mathbb{C}}$. Lusztig \cite{Lu09} reformulated (and proved) Kostant's construction using his theory of canonical bases on the quantum group $\U$. The coordinate ring $\RO_\BZ$ is identified as a dual subspace of the modified quantum group (at $q=1$) spanned by the dual canonical basis.

	\subsection{The symmetric subgroups} Let $G_k$ be a connected reductive algebraic group over an algebraically closed field $k$ of characteristic $\neq 2$. The fixed point subgroup $K_k=G_k^{\theta_k}$ is called a \emph{symmetric subgroup} of $G_k$. It was shown by Steinberg \cite{Ste68} that $K_k$ is reductive. We remark that $K_k$ may not be connected. Symmetric subgroups constitute significant examples of spherical subgroups and play a crucial role in recent advancements in the Relative Langlands Program \cites{BZSV,SV}. It is natural to extend the constructions of the Relative Langlands Program to the scheme-theoretic setting, making a scheme-theoretic construction of symmetric subgroups particularly important.
	
	To begin our construction, we need a unified classification of the symmetric subgroups. When $G_k$ is simple, it was shown by Springer \cite{Spr87} that involutions of $G_k$ are classified in terms of Satake diagrams which is independent of the defining field $k$, provided the characteristic of $k$ is not 2. We reformulate and generalize Springer's classification to all the  reductive groups using {\em $\imath$root data} in Section \ref{sec:fci}. 
		
	\subsection{Quantum symmetric pairs}
    Similar to Lusztig's construction of Chevalley group schemes, our approach crucially relies on quantization. Associated to the $\imath$root datum, we can construct a quantum symmetric pair $(\U,\U^\imath)$, where $\U^\imath \subset \U$ is a coideal subalgebra of the quantum group associated to the root datum of $G_k$. The quantum symmetric pair $(\U,\U^\imath)$ is a quantization of the pair of enveloping algebras of the symmetric pair $(\mathfrak{g}_k, \mathfrak{g}_k^{\theta_k})$. Here $\mathfrak{g}_k$ denotes the Lie algebra of $G_k$, and $\mathfrak{g}_k^{\theta_k}$ denotes the Lie algebra of $K_k$ (cf. \cite[\S 9.4]{Bo91}).  We often call $\U^\imath$ the {\em $\imath$quantum group}.
		
		Quantum symmetric pairs were originally introduced by Letzter \cite{Le99}, generalized by Kolb \cite{Ko14} to Kac-Moody cases.  The theory of canonical bases arising from quantum symmetric pairs was initiated by Bao and Wang \cite{BW18}. We refer to the survey \cite{Wang22} for recent developments in quantum symmetric pairs. 
  
  Let ${}_\A\dot{\U}^\imath$ be the $\A = \BZ[q,q^{-1}]$-form of the modified $\imath$quantum group. This is a free $\A$-subalgebra of the modified $\imath$quantum group $\dot{\U}^\imath$ with basis $\dot{\B}^\imath$. We call $\dot{\B}^\imath$ the $\imath$canonical basis of ${}_\A\dot{\U}^\imath$. 
		
		For any commutative ring $R$ and ring homomorphism $\A \rightarrow R$, we write ${}_R\dot{\U}^\imath=  R \otimes_{\A} {}_\A\dot{\U}^\imath$ if there is no ambiguity. We abuse notations and denote by  $\dot{\B}^\imath$ the basis of ${}_R\dot{\U}^\imath$ after base change.

	\subsection{Symmetric subgroup schemes} 
	
	We consider the ring homomorphism $\A \rightarrow \BZ$ mapping $q$ to $1$ and the algebra ${}_\BZ\dot{\U}^\imath=  \BZ \otimes_{\A} {}_\A\dot{\U}^\imath$.  Let $\RO^\imath_{\BZ}$ be the subspace of $\text{Hom}_{ {\BZ}}( {}_\BZ\dot{{\U}}^\imath,  \BZ)$ spanned by the dual basis of $\dot{\B}^\imath$. Let us state the main theorem of this paper. 
	
	\begin{thmintro}[Theorem~\ref{thm:Hopfi} \& Proposition~\ref{prop:GAi} \& Theorem~\ref{thm:Oik}]\label{thm:1}  \phantom{x}
	
	\begin{enumerate}
	\item  The subspace $\RO^\imath_{{\BZ}}$ is naturally a commutative Hopf algebra. 
    \item The algebra $\RO^\imath_{{\BZ}}$ defines a closed subgroup scheme of the Chevalley group scheme $\G$, denoted by $\G^\imath \subset \G$.  
    \item  Let $A$ be an integral domain with characteristic not 2. Then $\RO^\imath_{A} = \RO^\imath_{{\BZ}} \otimes_\BZ A$ is reduced.
    \item  Let $k$ be any algebraically closed field $k$ of characteristic $\neq 2$. We identify $\G^\imath_k$ with its set of $k$-rational points, denoted by $G^\imath_k$. 
	We have $G^\imath_k = K_k \subset G_k$. In particular, $\RO^\imath_{k}$ is the coordinate ring of the symmetric subgroup $K_k$.
	\end{enumerate}

	\end{thmintro}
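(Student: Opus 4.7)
The overall plan is to adapt Lusztig's construction of the Chevalley Hopf algebra $\RO_\BZ$ from the canonical basis on $\dot{\U}$ to the coideal setting of $\dot{\U}^\imath$. For (1), the key observation is that although $\U^\imath$ is merely a coideal subalgebra of $\U$ over $\Qq$, the specialization at $q=1$ behaves better: the coideal map $\Delta_\imath : \dot{\U}^\imath \to \dot{\U}^\imath \otimes \dot{\U}$ lifts, at $q=1$, to a genuine comultiplication $\dot{\U}^\imath \to \dot{\U}^\imath \otimes \dot{\U}^\imath$. At the Lie algebra level this reflects the classical fact that $U(\mathfrak{g}^\theta) \subset U(\mathfrak{g})$ is a Hopf subalgebra. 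I would verify that this lifting is integral, i.e.\ that the coideal coproduct of any $b \in \dot{\B}^\imath$ at $q=1$ already lies in ${}_\BZ\dot{\U}^\imath \otimes {}_\BZ\dot{\U}^\imath$. Granting this, the dual of the resulting Hopf algebra ${}_\BZ\dot{\U}^\imath$ equips $\RO^\imath_\BZ$ with a commutative Hopf algebra structure, commutativity being inherited from cocommutativity at $q=1$. Part (2) then follows: the sub-Hopf algebra inclusion ${}_\BZ\dot{\U}^\imath \hookrightarrow {}_\BZ\dot{\U}$ dualizes to a surjection $\RO_\BZ \twoheadrightarrow \RO^\imath_\BZ$, corresponding to a closed embedding $\G^\imath \hookrightarrow \G$.

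For parts (3) and (4), my approach is to first identify $\RO^\imath_k$ with the coordinate ring of $K_k$ for every algebraically closed field $k$ of characteristic $\neq 2$, and to deduce reducedness from this geometric identification. Over $k = \BC$, the identification follows from the Letzter--Kolb theory: the $q=1$ specialization ${}_\BC\dot{\U}^\imath$ is the idempotented enveloping algebra of $\mathfrak{g}_\BC^{\theta_\BC}$, and the full $K_\BC$ (not merely its identity component) is recovered from the lattice part of the $\imath$root datum. For general $k$, I would construct a natural map $K_k \to \G^\imath_k$ by matching root subgroups $x_\alpha : \Ga \to K_k$ and tori with the corresponding divided-power generators of ${}_k\dot{\U}^\imath$ using the $\imath$root data of Section~\ref{sec:fci}, and then invoke a Cartan--Bruhat-type decomposition of $K_k$ to conclude surjectivity. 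Reducedness of $\RO^\imath_A$ for an integral domain $A$ of characteristic $\neq 2$ follows from the embedding $\RO^\imath_A \hookrightarrow \RO^\imath_{\overline{\mathrm{Frac}(A)}}$, the target being the coordinate ring of the algebraic group $K_{\overline{\mathrm{Frac}(A)}}$, which is reduced by Steinberg's smoothness theorem.

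The main obstacle I anticipate is the integral identification of the Hopf structure in part (1): verifying that the coideal coproduct of every $b \in \dot{\B}^\imath$ at $q=1$ has all ``right factors'' landing in ${}_\BZ\dot{\U}^\imath$ rather than only in ${}_\BZ\dot{\U}$. This requires explicit control over the coideal expansion of the $\imath$canonical basis in terms of the canonical basis of $\dot{\U}$, likely via the $\imath$bar involution and the known compatibility properties of $\dot{\B}^\imath$ with $\dot{\B}$. A secondary difficulty is the possible disconnectedness of $K_k$ in small characteristic, which I would control via the lattice data in the $\imath$root datum, ensuring that $\G^\imath_k$ and $K_k$ have matching component groups.
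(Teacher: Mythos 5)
Your proposal diverges from the paper on two substantive points, one of which is a genuine gap.

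For part (1), your plan is to show that the $q=1$ specialization of the coideal map lifts to a genuine comultiplication ${}_\BZ\dot{\U}^\imath \to {}_\BZ\dot{\U}^\imath \hat{\otimes} {}_\BZ\dot{\U}^\imath$, making ${}_\BZ\dot{\U}^\imath$ a (completed) Hopf subalgebra of ${}_\BZ\dot{\U}$, and then dualize. You correctly flag that the integrality of this lift is the main obstacle — but the paper sidesteps it entirely. The paper never claims that the coproduct of an $\imath$-canonical basis element lands in ${}_\BZ\dot{\U}^\imath \hat{\otimes} {}_\BZ\dot{\U}^\imath$; it only uses the (known) \emph{right coideal} property $\widehat{\Delta}({}_A\dot{\U}^\imath) \subset {}_A\widehat{\U}^{\imath,1}$, whose second tensor factor ranges over all of $\dot{\RB}$, not $\dot{\RB}^\imath$. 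The Hopf structure on $\RO_A^\imath$ is \emph{not} the naive dual of an intrinsic Hopf structure on ${}_\BZ\dot{\U}^\imath$; it is the quotient structure on $\RO_A/I_A$, where $I_A = \ker(r)$ is shown to be a Hopf ideal. Showing $I_A$ is a two-sided ideal uses the coideal property plus the crucial fact that $\RO_A$ is \emph{commutative} at $q=1$; showing the comultiplication descends uses only that $\U^\imath$ is a subalgebra. Neither step needs the stronger claim you propose to prove, and it is not clear the stronger claim is even true over $\A$ or over $\BZ$ at $q=1$ — the paper's own Section 3.5 shows that over $\A$ the quotient $\RO^\imath_\A$ is only a coisotropic quantum subgroup, not a Hopf quotient. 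So your approach to (1) either needs a nontrivial new integrality theorem or should be replaced by the Hopf-ideal argument.

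For parts (3) and (4), your logic is circular. You propose to first identify $\RO^\imath_k$ with $k[K_k]$ and then deduce reducedness from smoothness of $K_k$. But the statement "$\RO^\imath_k$ is the coordinate ring of $K_k$" already \emph{presupposes} that $\RO^\imath_k$ is reduced: showing $G^\imath_k = K_k$ as sets of $k$-points only identifies the reduced scheme, leaving open whether $\RO^\imath_k$ has nilpotents. The paper therefore proves reducedness (Proposition~\ref{prop:GAi}) independently of (4), by a purely algebraic argument: $\RO^\imath_A$ is graded by $X_\imath$, a homogeneous zero-divisor pulls back under the injective section $s_\mu^*$ to a zero-divisor in the domain $\RO_A$, and a general nilpotent is handled via Frobenius and the no-odd-torsion Lemma~\ref{le:nop}. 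Only then is the set-theoretic equality $G^\imath_k = K_k$ (Theorem~\ref{thm:Oik}) enough to conclude $\RO^\imath_k \cong k[K_k]$. Your intended route through an explicit matching of root subgroups and a Bruhat decomposition of $K_k$ is also considerably harder than the paper's argument, which proves one inclusion by showing $\theta_k$ fixes ${}_k\dot{\U}^\imath$ pointwise, gets $K_k^\circ \subset G^\imath_k$ by comparing Lie algebras, and finishes with Vust's decomposition $K_k = T_k^{\theta_k}K_k^\circ$ plus a direct verification that $T_k^{\theta_k} \subset G^\imath_k$.

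Finally, your proposal omits the finiteness conditions (Lemma~\ref{le:finit}, Corollary~\ref{cor:fimu}, both resting on Watanabe's Strong Stability Theorem) that are needed even to define $\RO^\imath_A$ as a span of dual $\imath$-canonical basis elements and to make the restriction map $r : \RO_A \to \RO^\imath_A$ well defined. These are not a formality: they are precisely the ingredient that extends the construction from quasi-split to all symmetric pairs, and the paper flags this as the key improvement over the earlier preprint.
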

  We call $\G^\imath$ the {\em symmetric subgroup scheme} over $\BZ$. 
   Theorem \ref{thm:1} generalizes Lusztig's construction \cite{Lu09} of the Chevalley group scheme $\G$ using (dual) canonical bases arising from quantum groups. 

    Let us mention that Lusztig's construction in \cite{Lu09} applies to the reductive group $K_k$ on its own. One can construct the corresponding Chevalley group scheme $\mathbf{K}$ over $\BZ$ using the relevant quantum group. However, $\mathbf{K}$ is usually not a subgroup scheme of $\G$ (over $\BZ$). This reflects the incompatibility (e.g., no natural embeddings) between the quantum groups associated to $K_k$ and $G_k$, respectively. From the point of view of Poisson-Lie groups, this corresponds to the fact that $K_\mathbb{C}\subset G_{\mathbb{C}}$ is only a coisotropic subgroup, and is not a Poisson subgroup in general.

    Our construction admits a natural quantization, that is, we have an $\A$-vector space $\RO_\A^\imath$ which is a quotient of the quantized coordinate algebra $\RO_\A$ of the reductive group. In Section \ref{sec:qt}, we show that $\RO_\A^\imath$ is a coisotropic quantum subgroup of $\RO_\A$ in the sense of Ciccoli \cite{Cic97}. 

\subsection{Historical remarks and applications}

In the work \cite[\S~3]{BS22} joint with Huanchen Bao, we have obtained the results in this paper for symmetric subgroups of \emph{quai-split} types. The main issue for other types was that the Strong Stability Theorem \cite{Wa23} (Theorem \ref{thm:stab}) for $\imath$canonical bases was only available for quasi-split cases back then. This theorem provides a crucial tool for establishing finiteness conditions related to $\imath$canonical bases, which form the foundation of our construction. See Section \ref{sec:fd} for details. The Strong Stability Theorem was later proved by Watanabe for all symmetric pairs. Therefore we are able to prove all the results in \cite[\S 3]{BS22} in this generality. 

The original motivation for the work \cite{BS22} was the study of the geometry of $K$-orbit closures in flag manifolds, where symmetric subgroup schemes were used to connect quantum algebras with geometry. We soon realized that the construction of symmetric subgroup schemes holds its own interest. In our later works \cites{BS24, BS25}, we employed symmetric subgroup schemes (for general types) to study symmetric spaces and their equivariant embeddings in positive characteristic. In the recent work of Watanabe \cite{Wa24b}, (quantized) symmetric subgroup schemes inspired the study of integrable modules over $\imath$quantum groups. For these reasons, we believe that the construction of symmetric subgroup schemes for general types merits careful documentation. This is the aim of this paper.

Some contents of the paper overlaps with the preprints \cite[\S~3]{BS22}. The main improvement is that we obtain all the results in \cite[\S~3]{BS22} for all symmetric pairs, which was previously established only for quasi-split types. Section \ref{sec:qt} is new.

\vspace{.2cm}

\noindent {\bf Acknowledgment: }The author is grateful to Huanchen Bao for his continuous support. The author is supported by the Glorious Sun Charity Fund.

\section{Preliminaries}

\subsection{Quantum groups}
We recall basic constructions regarding quantum groups mainly following \cite{Lu93}. 
	
\subsubsection{Root data}\label{sec:rod}
	A \emph{Cartan datum} (cf. \cite[\S 1.1.1]{Lu93}) (of finite type) is a pair $(\I,\cdot)$ consisting of a finite set $\I$ and a positive definite symmetric bilinear form on $\mathbb{Z}[\I]$ taking values in $\mathbb{Z}$ such that
    \begin{itemize}
        \item $i\cdot i\in\{2,4,6\}$ for any $i\in\I$;
        \item $2\frac{i\cdot j}{i\cdot i}\in\{0,-1,-2,-3\}$ for any $i\neq j$ in $\I$.
    \end{itemize}
The positive definite symmetric bilinear form defines a Euclidean space $\mathbb{R}^{|\I|}$ with the canonical basis $\I$. Then $\I\subset \mathbb{R}^{|\I|}$ forms a set of simple roots of a root system in $\mathbb{R}^{|\I|}$. Let $W=\langle s_i\mid i\in\I\rangle$ be the Weyl group associated with the root system. It is known that Cartan data classify split semisimple Lie algebras. To classify reductive groups, we need to expand the data to contain more information.

Given a Cartan datum $(\I,\cdot)$, a \emph{root datum} $(Y,X,(\alpha_i^\vee)_{i\in\I},(\alpha_i)_{i\in\I})$ of type $(\I,\cdot)$ consists of 
	\begin{itemize}
	    \item two finitely generated free abelian groups $X$, $Y$ equipped with a perfect pairing $\langle\,,\,\rangle:Y\times X\rightarrow \mathbb{Z}$;
	    \item elements $\alpha_i$ $(i\in\I)$ in $X$ and elements $\alpha_i^\vee$ $(i\in\I)$ in $Y$, such that $\langle \alpha_i^\vee,\alpha_j\rangle=2\frac{i\cdot j}{i\cdot i}$ for any $i,j\in\I$.
	\end{itemize}
	
The isomorphism between two root data is defined in an evident manner. Let $\epsilon_i=\frac{i\cdot i}{2}$ for any $i\in\I$, and $c_{ij}=\langle\alpha_i^\vee,\alpha_j\rangle$ for $i,j\in\I$. Let $X^+=\{\mu\in X\mid\langle\alpha_i^\vee,\mu\rangle\geq 0,\forall i\in\I\}$ be the set of \emph{dominant weights}. Since given a root datum, the underlying Cartan datum is uniquely determined, we will not specify the type of a root datum in what follows.

Let $k$ be an algebraically closed field. For any triple $(G_k,T_k,B_k)$, where $G_k$ is a connected reductive group defined over $k$, $T_k$ is a maximal torus of $G_k$, and $B_k$ is a Borel subgroup of $G_k$ containing $T_k$, one can associate a root datum in the following way. 
	
Let $X=\text{Hom}(T_k,k^\times)$, and $Y=\text{Hom}(k^\times,T_k)$ be the character lattice and the cocharacter lattice. Then there is a natural pairing between $Y$ and $X$. Let $(\alpha_i)_{i\in\I}\subset X$ (resp., $(\coroot_i)_{i\in\I}\subset Y$) be the simple roots (resp., simple coroots) associated to the Borel subgroup $B_k$, where $\I$ is a finite index set. Then $(Y,X,(\alpha_i^\vee)_{i\in\I},(\alpha_i)_{i\in\I}))$ is a root datum. For fixed $G_k$, by choosing different pair $(T_k,B_k)$, one gets isomorphic root data. We call any root datum in this isomorphism class the \emph{root datum associated to $G_k$}. There is a bijection between isomorphism classes of connected reductive groups over algebraically closed field, and isomorphism classes of root data.
    
	\subsubsection{Definitions} \label{sec:defq}
	We next recall the definition of quantum groups. Fix a root datum $(Y,X,(\alpha_i^\vee)_{i\in\I},(\alpha_i)_{i\in\I})$. Let $q$ be an indeterminate. We write $\A=\mathbb{Z}[q,q^{-1}]$ be the subring of $\mathbb{Q}(q)$. Write $q_i=q^{\epsilon_i}$ for all $i$ in $\I$. For $m,n,d\in \mathbb{Z}$ with $m\geq 0$, define 
	\begin{equation*}
	[n]=\frac{q^n-q^{-n}}{q-q^{-1}} \quad \text{ and } \quad [m]!=[1][2]\cdots[m].
	\end{equation*}
	These are called \emph{q-integers} and \emph{q-factorials}. Also define \emph{q-binomial coefficients}:
	\begin{equation*}
	\qbinom{n}{d}=\frac{[n][n-1]\cdots[n-d+1]}{[d]!},\text{ for $d\ge 0$}. 
	\end{equation*}
	Similarly define $[n]_i$, $[m]_i!$ and $\qbinom{n}{d}_i$ with $q$ replaced by $q_i$. Note that these are all elements in $\A$.
	
	The quantum group $\U$ associated to the given root datum is the associated $\mathbb{Q}(q)$-algebra (with 1) with generators 
	\begin{equation*}
	E_i \quad (i\in\I),\qquad F_i \quad (i\in\I), \qquad K_\mu \quad ( \mu\in Y),
	\end{equation*}
	which subjects to the relations
	\begin{align*}
	&K_0=1,\quad K_\mu K_{\mu'}=K_{\mu+\mu'}, \text{ for all }\mu,\,\mu'\in Y;\\
	&K_\mu E_i=q^{\langle \mu,\alpha_i\rangle}E_i K_\mu, \text{ for all }i\in\I,\, \mu\in Y;\\
	&K_\mu F_i=q^{-\langle \mu,\alpha_i\rangle}F_i K_\mu, \text{ for all }i\in\I,\, \mu\in Y;\\
	&E_iF_j-F_jE_i=\delta_{ij}\frac{K_i-K_{i}^{-1}}{q_i-q_i^{-1}}, \text{ for all }i,j\in\I, \text{ where }K_i=K_{\coroot_i}^{\epsilon_i} ;\\
	&\sum_{n=0}^{1-c_{ij}}(-1)^nE_i^{(n)}E_jE_i^{(1-c_{ij}-n)}=0,\qquad
	\sum_{n=0}^{1-c_{ij}}(-1)^nF_i^{(n)}F_jF_i^{(1-c_{ij}-n)}=0.
	\end{align*}
Here $E_i^{(n)}=E_i^n/[n]_i!$ and $F_i^{(n)}= F_i^n/[n]_i!$ are the divided powers.

There is a unique algebra incolution $\omega:\U\rightarrow\U$ such that $\omega(E_i)=F_i$, $\omega(F_i)=E_i$, $\omega(K_\mu)=K_{-\mu}$ for $i\in\I$, $\mu\in Y$. 
	
	\subsubsection{Canonical bases}
    
	 Let $\dot{\U}$ be the modified quantum group \cite[\S 23.1]{Lu93} and $\dot{\RB}$ be its canonical basis (\cite[\S 25.2.1]{Lu93}). Let $\AU$ be the $\A$-form of $\dot{\U}$, which is an $\A$-subalgebra of  $\dot{\U}$ generated by $E_i^{(n)} 1_\lambda$ and $F_i^{(n)} 1_\lambda$ for various $i \in I$, $n \ge 0$ and $\lambda \in X$. Then $\AU$ is a free $\A$-module with basis $\dot{\RB}$. We denote by $\AU^{>0}$ (resp., $\AU^{<0}$) the $\A$-subalgebra generated by $E_i^{(n)} 1_\lambda$ (resp., $F_i^{(n)} 1_\lambda$) for various $i \in I$, $n \ge 0$ and $\lambda \in X$.

     For any $\lambda\in X^+$, let $L(\lambda)$ be the (type I) simple $\U$-module (as well as a $\dot{\U}$-module) with highest weight $\lambda$. Let $\RB(\lambda)$ be the canonical basis of $L(\lambda)$. Let $v_\lambda \in L(\lambda) $ be the highest weight vector in $\RB(\lambda)$. Let $^\omega L(\lambda)$ to be the simple $\U$-module (as well as a $\dot{\U}$-module) where the action is twisted by $\omega$. It has the lowest weight $-\lambda$. We denote by ${}^\omega \RB(\lambda)$ the canonical basis of $^\omega L(\lambda)$ and denote by $v_{-\lambda}^-$ the unique canonical basis element of weight $-\lambda$. One has the following finiteness property \cite[\S 29.1.6]{Lu93}. 

     (a) {\emph{ For any $\U$-module $M$ which is isomorphic to a direct sum of simple modules, there are only finitely many canonical basis element $b\in\dot{\RB}$, such that $b$ acts on $M$ by a non-zero linear map.}}

     For $\lambda,\mu\in X^+$, let $\RB(\lambda,\mu)$ be the canonical basis of the module $^\omega L(\lambda)\otimes L(\mu)$ (\cite[Theorem 24.3.3]{Lu93}). Then one has the following compactibility property \cite[Theorem 25.2.1]{Lu93}.

     (b) {\emph{ The map $\dot{\U}\rightarrow {^\omega L(\lambda)\otimes L(\mu)}$, $u\mapsto u\cdot v_{-\lambda}^{-}\otimes v_\mu^+$, sends elements in $\dot{\RB}$ to either zero or elements in $\RB(\lambda,\mu)$, and the kernel is spanned by a subset of $\dot{\RB}$.}}

\subsubsection{Braid group actions}

 For $i\in\I$, $w\in W$, let $\T_i=\T_{i,+1}''$ and $\T_w=\T_{w,+1}''$ be Lusztig's braid group operators on $\U$ (\cite[\S 37.1]{Lu93}), which we recall as follows.

 \begin{align*}
    &\T_i(E_i)=-F_iK_i,\qquad \T_i(F_i)=-K_i^{-1}E_i;\\
    &\T_i(E_j)=\sum_{r=0}^{-c_{ij}}(-1)^rq_i^{-r}E_i^{(-c_{ij}-r)}E_jE_i^{(r)}\quad \text{for }j\neq i;\\
    & \T_i(F_j)=\sum_{r=0}^{-c_{ij}}(-1)^rq_i^{r}F_i^{(r)}F_jF_i^{(-c_{ij}-r)}\quad \text{for }j\neq i;\\
    & \T_i(K_\mu)=K_{s_i\mu}\quad \text{for $\mu\in Y$}.
\end{align*}

For any $w\in W$, let $w=s_{i_1}s_{i_2}\cdots s_{i_k}$ be a reduced expression. For any sequence $\cc=(c_1,c_2,\cdots,c_k)\in \BN^k$, define the element
\[
x_\cc=E_{i_1}^{(c_1)}\cdot \T_{i_1}(E_{i_2}^{(c_2)})\cdots \T_{i_1}\cdots\T_{i_{k-1}}(E_{i_k}^{(c_k)})\in {_\A\U^+}.
\]
It is known (cf. \cite[Proposition 40.2.1]{Lu93}) that the set $\{x_\mathbf{c}\mid \mathbf{c}\in\mathbb{N}^k\}$ forms an $\A$-basis of an $\A$-submodule $_\A\U^+(w)$, which is independent of the choice of the reduced expression of $w$. Set $_\A\U^+(w)_{>}$ to be the $\A$-submodule of $_\A\U^+(w)$ spanned by $\{x_\cc \mid \cc\in \BN^k-(0,\cdots,0)\}$.

\subsection{Chevalley group schemes}
Take a root datum $(Y,X,(\alpha_i)_{i\in\I},(\alpha_i^\vee)_{i\in\I})$ as in Section \ref{sec:rod}. In this section we recall Lusztig's construction of the Chevalley group scheme $\G$ in \cite{Lu09}.
\subsubsection{Lusztig's construction}\label{sec:nocp}

Let $A$ be an $\A$-algebra. Let ${}_A\dot{\U} = A \otimes_{\A} {}{_\A}\dot{\U}$ and ${}_A\dot{\RB}=\{1\otimes b\mid b\in\dot{\RB}\}$. The set ${}_A\dot{\RB}$ is called the canonical basis of $_A\dot{\U}$. If there is no confusion, we will write ${}_A\dot{\RB}=\dot{\RB}$, and use $b$ to denote the image $1\otimes b$.

Let $_A\widehat{\mathrm{U}}$ be the $A$-module consisting of all the formal linear combinations
\[
    \sum_{a\in\dot{\RB}} n_a a, \quad n_a\in A.
\]
One has a natural embedding $_A\dot{\U}\subset {}_A\widehat{\U}$. By \cite[\S 1.11]{Lu09}, $_A\widehat{\U}$ has a structure of $A$-algebra extending the algebra structure on $_A\dot{\U}$.
    
 Let $_A\widehat{\mathrm{U}}^{(2)}$ be the $A$-module consisting of all the formal linear combinations
\[
    \sum_{(a,a')\in \dot{\RB}\times \dot{\RB}} n_{a,a'}a\otimes a', \quad n_{a,a'}\in A.
\]
Then ${}_A\widehat{\U}^{(2)}$ also has an $A$-algebra structure by \cite[\S 1.11]{Lu09}. One has an $A$-algebra homomorphism $\WD:{_A\widehat{\mathrm{U}}}\longrightarrow {_A\widehat{\mathrm{U}}}^{(2)}$, compatible with the coproduct on $_A\dot{\U} $ by \cite[\S 1.17]{Lu09}.
Let $S:{_A\dot{\U}}\rightarrow {_A\dot{\U}}$ be the antipode map. It extends to $\widehat{S}:{_A\widehat{\U}}\rightarrow {_A\widehat{\U}}$. 

Let $\RO_A$ be the $A$-submodule of $_A\dot{\U}^*=\text{Hom}_A({_A\dot{\U}},A)$, which is spanned by $\{b^*\mid b\in \dot{\RB}\}$, where $b^* \in {}_A\dot{\U}^*$ is the \emph{dual canonical basis} element which sends $b$ to 1, and sends other canonical basis element to 0. Then for any element $f$ in $\RO_A$, it extends to an $A$-linear form on the completion $_A\widehat{\U}$. We write $\hat{f}$ to denote this extension.

Lusztig defined an $A$-Hopf algebra structure on $\RO_A$, which is essentially the dual of the Hopf algebra structure on $_A\widehat{\U}$ (\cite[\S 3.1]{Lu09}). Let $\delta:\RO_A\rightarrow\RO_A\otimes\RO_A$ be the coproduct, $\sigma:\RO_A\rightarrow\RO_A$ be the antipode, and $\epsilon:\RO_A\rightarrow A$ be the counit. The following commuting diagrams describe the product, coproduct and antipode structure on $\RO_A$.

For any $f$, $g \in \RO_A$, it follows from \cite[\S 3.1]{Lu09} that the following diagrams commute
\begin{equation*}
    \begin{tikzcd}
    & _A\widehat{\U} \arrow[r,"\WD"] \arrow[rd,"\widehat{fg}"'] & {_A\widehat{\U}}^{(2)} \arrow[d,"f\otimes g"] \\ &  & A
    \end{tikzcd}
    \begin{tikzcd}
    &  {_A\widehat{\U}}\otimes{_A\widehat{\U}} \arrow[r,"m"] \arrow[rd,"\delta(f)"'] & {_A\widehat{\U}} \arrow[d,"\hat{f}"] \\ & & A
    \end{tikzcd}
  \quad
    \begin{tikzcd}
    _A\widehat{\U} \arrow[r,"\widehat{S}"] \arrow[rd,"\widehat{\sigma(f)}"'] & {_A\widehat{\U}} \arrow[d,"\hat{f}"] \\ & A
    \end{tikzcd}.
\end{equation*}
Here $f\otimes g: {_A\widehat{\U}^{(2)}}\rightarrow A$ sends formal linear combination $\sum_{(a,a')\in \dot{\RB}\times\dot{\RB}}n_{a,a'}a\otimes a'$ to $\sum_{(a,a')\in \dot{\RB}\times\dot{\RB}}n_{a,a'}f(a) g(a')$, where the second sum is finite, thanks to the definition of $\RO_A$. Similarly $\delta(f)\in \RO_A\otimes\RO_A$ induces a well-defined $A$-linear form on $_A\widehat{\U}\otimes{_A\widehat{\U}}$. We still use $\delta(f)$ to denote this map.

For the rest of this section, we assume that $A$ is a commutative ring where $q$ acts by 1 as an $\A$-module. By \cite[Theorem 3.15]{Lu09}, $\RO_A$ is a commutative Hopf algebra over $A$. The set $\text{Hom}_{A-\text{alg}}(\RO_A, A)$, consisting of $A$-algebra homomorphisms $\RO_A\rightarrow A$ preserving 1, has a group structure.

By definition, $\text{Hom}_{A-\text{alg}}(\RO_A,A)$ is an $A$-submodule of $\RO_A^{*}$, the $A$-linear duals of $\RO_A$. There is an $A$-linear bijective map  from $\RO_A^{*}$ to $_A\widehat{\U}$, given by $\phi\mapsto \sum_{b\in\dot{\RB}}\phi(b^*)b$. Under this bijection, the image of the subset $\text{Hom}_{A-\text{alg}}(\RO_A,A)$ is 
\begin{equation}\label{eq:ga}
G_A=\{\xi=\sum_{b\in\dot{\RB}}n_b b\in {_A\widehat{\mathrm{U}}}\mid \WD(\xi)=\xi\otimes\xi,\,n_{1_0}=1\}.
\end{equation}
Here for any $\xi=\sum_{b\in \dot{\RB}}n_bb$ and $\xi'=\sum_{b'\in\dot{\RB}}n'_{b'}b'$ in $_A\widehat{\U}$, the element $\xi\otimes\xi'$ in $_A\widehat{\U}^{(2)}$ is defined to be the formal linear combination
\[
\xi\otimes\xi'=\sum_{(b,b')\in \dot{\RB}\times \dot{\RB}} n_{b}n'_{b'}b\otimes b'.
\]

It is clear that the subset $G_A$ is closed under taking products. It moreover admits a group structure, where the unit is $\sum_{\lambda\in X}\one_\lambda$, and the inverse is given by the restriction of the antipode $\widehat{S}:{_A\widehat{\U}}\rightarrow{_A\widehat{\U}}$. The bijection $\text{Hom}_{A-\text{alg}}(\RO_A,A)\xrightarrow{\sim}G_A$ is a group isomorphism.

\begin{definition}
Define the $\BZ$-group scheme $\G$ by setting $\G(A)=G_A$ \footnote{Here we identify a $\BZ$-group scheme with the associated $\BZ$-group functor following \cite{Jan03}} for any (unital) commutative ring $A$. Then $\G\cong Spec\,\RO_\BZ$ is called the Chevalley group scheme associated to the given root datum.    
\end{definition}

\subsubsection{The reductive group $G_k$ and its Lie algebra}\label{sec:regpq}
Let $k$ be an algebraically closed field. Set $G_k$ to be the subset of $_k\widehat{\U}$ defined as above by letting $A=k$. By \cite[Theorem~4.11]{Lu09}, $G_k$ is a connected reductive group over $k$ with the coordinate ring $\RO_k$. Let $T_k$ be the subset of $G_k$ consisting of elements of the form $\sum_{\lambda\in X}n_\lambda\one_\lambda$, such that $n_\lambda n_{\lambda'}=n_{\lambda+\lambda'}$. Let $_k\widehat{\U}^{>0}$ be the $k$-subspace of $_k\widehat{\U}$ consisting of all elements of the form $  \sum_{b\in \dot{\RB} \cap {}_k\dot{\U}^{>0},\lambda\in X}n_b(b\one_\lambda)$ with $n_b\in k$. Set $G_k^{>0}=G_k\cap {_k\widehat{\U}^{>0}}$ and $B_k=G_k^{>0}T_k$. It follows from \cite[Theorem 4.11]{Lu09} that $T_k$ is a maximal torus of $G_k$ and $B_k$  is a Borel subgroup of $G_k$.

We have the following isomorphisms as free abelian groups
\begin{align*}
X&\xrightarrow{\sim} \text{Hom}(T_k,k^\times),\qquad &Y\xrightarrow{\sim} &\text{Hom}(k^\times,T_k)\\
\lambda & \longmapsto \big(\sum_{\lambda\in X}n_\lambda\one_\lambda\mapsto n_\lambda\big),\qquad &\gamma\longmapsto &\big(a\mapsto \sum_{\lambda\in X}a^{\langle \gamma,\lambda\rangle}\one_\lambda\big).
\end{align*}
These two maps give an isomorphism between the root data defining the quantum group with the root data associated to $(G_k,T_k,B_k)$.

For any linear algebraic group $H_k$ over $k$, we use the notation $\lie(H_k)$ to denote the Lie algebra, which is a subalgebra of $k[H_k]^*$, the linear dual of the coordinate ring of $H_k$. 

Recall that $\RO_k$ is identified with the coordinate ring of the connected reductive group $G_k$. We next describe its Lie algebra as a subalgebra of $_k\widehat{\U}$. 

We firstly identify $\RO_k^*$ with $_k\widehat{\U}$, via $\mu\mapsto\sum_{b\in\dot{\RB}}\mu(b^*)b$. For any $i\in\I$ and $\mu\in Y$, define the following elements in $_k\widehat{\U}$:
\[
e_i=\sum_{\lambda\in X}E_i\one_\lambda,\qquad f_i=\sum_{\lambda\in X}F_i\one_\lambda,\qquad h_\mu=\sum_{\lambda\in X} \langle \mu,\lambda \rangle \one_\lambda.
\]
Here we use the same notations to denote the elements in the algebra after base change.
 
Then the Lie algebra $\lie(G_k)$ is identified as the Lie subalgebra of $_k\widehat{\U}$ generated by $e_i$ ($i\in\I$), $f_i$ ($i\in\I$), and $h_\mu$ ($\mu\in Y$).


\subsection{The $\imath$quantum groups}

In this section, we recall the definitions and important properties of $\imath$quantum groups.
\subsubsection{The $\imath$root data}\label{sec:irt}

Given a Cartan datum $(\I,\cdot)$, a \emph{Satake diagram} $(\I=\I_\bullet\sqcup\I_\circ,\tau)$ consists of a bipartition $\I=\I_\bullet\sqcup\I_\circ$ and an involution $\tau:\I\rightarrow\I$ satisfying certain conditions (cf. \cite{Ar62}). Let $W_{\I_\bullet}\subset W$ be the parabolic subgroup generated by simple reflections $s_i$ with $i\in\I_\bullet$, and let $w_\bullet$ be the longest elements in $W_{\I_\bullet}$. We shall omit the precise definition for the Satake diagram but mention the following two properties: (1) the map $\tau$ preserves the bilinear form on $\mathbb{Z}[\I]$ and leaves $\I_\circ$ and $\I_\bullet$ invariant; (2) for any $i\in\I_\bullet$ one has $\tau i=-w_\bullet i$.  We refer the readers to \cite[Table 4]{BW18} for a list of irreducible Satake diagrams. It is known that Satake diagrams classify symmetric pairs of semisimple Lie algebras. In order to classify symmetric pairs of reductive groups, we shall expand the data and consider the \emph{$\imath$root data}.

\begin{definition}\label{def:iroot}
    Given a Satake diagram $(\I=\I_\bullet\sqcup\I_\circ,\tau)$, an \emph{$\imath$root datum} of type $(\I=\I_\bullet\sqcup\I_\circ,\tau)$ is a tuple $(Y,X,\{\alpha_i^\vee\}_{i\in\I},\{\alpha_i\}_{i\in\I},\theta)$ consisting of
    \begin{itemize}
    \item a root datum $(Y,X,\{\alpha_i^\vee\}_{i\in\I},\{\alpha_i\}_{i\in\I})$;
        \item an involution $\theta$ on $X$ as abelian groups, such that $\theta(\alpha_i)=-w_\bullet\alpha_{\tau i}$ for any $i\in\I$. 
    \end{itemize}
\end{definition}

Given an $\imath$root datum, its underlying Satake diagram is uniquely determined. Therefore, we will not specify its type when referring to an $\imath$root datum. The involution $\theta$ on $X$ induces an involution on $Y$ via the perfect pairing. When there is no confusion, we still use $\theta$ to denote this involution on $Y$.

Following \cite[(3.3)]{BW18}, we define the \emph{$\imath$-weight lattice} and the \emph{$\imath$-root lattice} as follows
\[
X_\imath=X/\Breve{X},  \text{ where }\Breve{X}=\{\lambda-\theta(\lambda)\mid \lambda\in X\},\quad \text{ and } \quad Y^\imath=\{\mu\in Y\mid \theta(\mu)=\mu\}.
\]

For any $\lambda\in X$, we denote by $\overline{\lambda}$ the image of $\lambda$ in $X_\imath$. There is a well-defined pairing $Y^\imath\times X_\imath\rightarrow\BZ$, $(\mu,\overline{\lambda}) \mapsto \langle \mu,\lambda\rangle$, for  $\mu\in Y^\imath, \lambda\in X$. We denote pairing by $\langle \cdot,\cdot\rangle$ again. Note that this pairing is not perfect in general.

\begin{lemma}\label{le:nop}
The abelian group $X_\imath$ has no odd torsion. Namely, for any $\overline{\lambda} \in X_\imath$ and any odd $n\in\BN$ with $n\overline{\lambda}=0$, we have  $\overline{\lambda}=0$.
\end{lemma}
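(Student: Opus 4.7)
The plan is to show that any odd-torsion element of $X_\imath$ must come from an element $\lambda \in X$ that lies in the $(-1)$-eigenspace of $\theta$, and then to observe that such elements automatically become $2$-torsion in $X_\imath$; a Bézout argument will then finish the proof.

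In more detail, suppose $\bar\lambda \in X_\imath$ and $n \bar \lambda = 0$ for some odd $n$. This means $n\lambda \in \breve{X}$, so we can write $n\lambda = \mu - \theta(\mu)$ for some $\mu \in X$. The first step is to apply $\theta$ to both sides: using $\theta^2 = \mathrm{id}$, this yields $n\theta(\lambda) = \theta(\mu) - \mu = -n\lambda$, hence $n(\lambda + \theta(\lambda)) = 0$ in $X$. Since $X$ is a finitely generated free abelian group (in particular torsion-free), we deduce $\theta(\lambda) = -\lambda$.

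The second step is to note that any $\lambda$ with $\theta(\lambda) = -\lambda$ satisfies $2\lambda = \lambda - \theta(\lambda) \in \breve{X}$. Combined with the hypothesis $n\lambda \in \breve{X}$ and the fact that $\gcd(n,2) = 1$, Bézout's identity gives integers $a,b$ with $an + 2b = 1$, whence
\[
\lambda = a(n\lambda) + b(2\lambda) \in \breve{X},
\]
so $\bar\lambda = 0$ in $X_\imath$, as required.

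There is no real obstacle here: the argument uses only that $X$ is torsion-free and that $\theta$ is an involution of abelian groups, so no information from the $\imath$root datum beyond Definition \ref{def:iroot} is needed. I would simply present the three-line calculation above.
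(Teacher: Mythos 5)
Your proof is correct, and it takes a genuinely different route from the paper's. The paper argues by localization: it tensors the short exact sequence $0\to\breve{X}\to X\to X_\imath\to 0$ with $\BZ[2^{-1}]$, observes that $\lambda\otimes 1\mapsto(\lambda-\theta\lambda)\otimes\tfrac12$ defines a retraction, so $X_\imath\otimes\BZ[2^{-1}]$ embeds as a direct summand of the free module $X\otimes\BZ[2^{-1}]$, hence is torsion-free, and then invokes the structure theorem to conclude $X_\imath$ has no odd torsion. Your argument is element-wise and more elementary: from $n\lambda\in\breve{X}$ you extract that $\theta(\lambda)=-\lambda$ (using torsion-freeness of $X$), so $2\lambda\in\breve{X}$, and Bézout with $\gcd(n,2)=1$ finishes it. Both hinge on the same structural fact — that the involution forces the quotient's torsion to be concentrated at the prime $2$, because $2\lambda=\lambda-\theta\lambda\in\breve{X}$ for any $\theta$-anti-invariant $\lambda$ — but you make this explicit rather than packaging it into a splitting. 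Your version avoids flatness, the structure theorem, and the localization machinery; the paper's version is perhaps more conceptual in that it directly exhibits $X_\imath[2^{-1}]$ as a free module. Either would serve in the paper.
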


\begin{proof}
Let  $\BZ[2^{-1}] \subset \BQ$ be the localization of $\BZ$ inverting $2$. By definition, we have the short exact sequence
\[
0\rightarrow \Breve{X}\rightarrow X\rightarrow X_\imath\rightarrow 0.
\]
Tensoring with the flat $\BZ$-module $\BZ[2^{-1}]$, we get
\[
0\rightarrow \Breve{X}\otimes \BZ[2^{-1}]\rightarrow X\otimes \BZ[2^{-1}]\rightarrow X_\imath\otimes \BZ[2^{-1}]\rightarrow 0.
\]

Note that the $\BZ[2^{-1}]$-module homomorphism 
\[
X\otimes\BZ[2^{-1}]\longrightarrow \Breve{X}\otimes \BZ[2^{-1}]
\]
sending $\lambda\otimes 1$ to $(\lambda-\theta\lambda)\otimes 1/2$ gives a splitting for the above short exact sequence. Hence $X_\imath\otimes \BZ[2^{-1}]$ can be viewed as a submodule of $X\otimes \BZ[2^{-1}]$, which is torsion-free. We deduce that $X_\imath\otimes \BZ[2^{-1}]$ is torsion-free. Then by the structure theory of finitely generated abelian groups, we see that $X_\imath$ has no odd torsion. 
\end{proof}

\subsubsection{Definitions}\label{sec:par}

We define the \emph{$\imath$quantum group} $\U^\imath$ to be the $\Qq$-subalgebra of $\U$ generated by the following elements (\cite[Definition~3.5]{BW18})
\begin{align*}
 B_i=F_i+\varsigma_i\T_{w_\bullet}(E_{\tau i})K_i^{-1}\; (i\in\I_\circ), \quad 
K_\mu \; (\mu\in Y^\imath),\quad F_i\; (i\in\I_\bullet),\quad E_i\; (i\in\I_\bullet).
\end{align*}
Here $\varsigma_i\in \pm q^{\mathbb{Z}}$ are parameters satisfying certain conditions in \cite[Definition~3.5]{BW18}. We will only recall these conditions later when they are needed. Let $\dot{\U}^\imath$ be the modified algebra of $\U^\imath$ and let $\dot{\RB}^\imath$ be the canonical basis of $\dot{\U}^\imath$ defined in \cite[\S3.7, \S6.4]{BW18}. The algebra $\dot{\U}$ is naturally a $(\dot{\U}^\imath, \dot{\U}^\imath)$-bimodule by \cite[\S3.7]{BW18}.
Let $_\A\dot{\U}^\imath$ be the $\A$-form of $\dot{\U}^\imath$ \cite[Definition~3.19]{BW18}, which is an $\A$-subalgebra of $\dot{\U}^\imath$ as well as a free $\A$-module with basis $\dot{\RB}^\imath$. 

For any $\A$-algebra $A$, set ${}_A\dot{\U}^\imath=A\otimes_{\A} {_\A\dot{\U}^\imath}$. Then the following claim holds.

(a) {\emph{ For any $\A$-algebra $A$, we have an $A$-algebra embedding ${}_A\dot{\U}^\imath \rightarrow  {_A\widehat{\mathrm{U}}}$, $x \mapsto \sum_{\lambda \in X} x \one_\lambda$. Here $x \one_\lambda \in {_\A\dot{\U}}$ is via the ${}_A\dot{\U}^\imath$-action on ${}_A\dot{\U}$.}}

	Since  ${}_\A\dot{\U}^\imath$ is a free $\A$-module by  \cite[Theorem~6.17]{BW18}, it suffices to prove the claim for $A = \A$, which follows from \cite[\S3.7]{BW18} and \cite[\S1.11]{Lu09}.

\subsubsection{The strong stability theorem}

For $\lambda,\mu\in X^+$, let $L^\imath(\lambda,\mu)=\U\cdot v^+_{w_\bullet\lambda}\otimes v^+_\mu$ be the $\U$-submodule of $L(\lambda)\otimes L(\mu)$ generated by the element $v^+_{w_\bullet\lambda}\otimes v^+_\mu$. By \cite[Proposition 6.8]{BW18}, there is a canonical basis $\RB^\imath(\lambda,\mu)$ of the module $L^\imath(\lambda,\mu)$. The following theorem was conjectured by Bao--Wang in \cite[Remark 6.18]{BW18} under the name of the strong stability conjecture. It was proved by Watanabe \cite[Theorem 4.3.1]{Wa23}.

\begin{theorem}(\cite[Theorem 4.3.1]{Wa23})\label{thm:stab}
    Under the proper choice of the parameters $\varsigma_i$ $(i\in\I_\circ)$, the map $\dot{\U}^\imath\rightarrow L^\imath(\lambda,\mu)$, $u\mapsto u\cdot v_{w_\bullet \mu}^+\otimes v_\lambda^+$, sends elements in $\dot{\RB}^\imath$ to either zero or elements in $\RB^\imath(\lambda,\mu)$, and the kernel is spanned by a subset of $\dot{\RB}^\imath$.
\end{theorem}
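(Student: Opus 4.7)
The plan is to adapt Lusztig's compatibility argument for $\dot{\RB}$ and $\RB(\lambda,\mu)$ (property~(b) in Section~\ref{sec:defq}) by replacing the usual bar involution with the $\imath$-bar involution $\psi^\imath$ built from the Bao--Wang quasi-$\mathcal{K}$-matrix $\Upsilon$. The structural inputs I would use are that both $\dot{\RB}^\imath$ and $\RB^\imath(\lambda,\mu)$ are characterized in \cite{BW18} as the unique bar-invariant integral bases asymptotically equal, modulo $q^{-1}\mathcal{L}$, to a standard PBW-type basis, together with the general fact that the action of a $\psi^\imath$-invariant element on a $\psi^\imath$-invariant cyclic vector is again $\psi^\imath$-invariant.

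First I would equip $L(\lambda)\otimes L(\mu)$ with the $\imath$-bar involution $\psi^\imath = \Upsilon\circ(\psi\otimes\psi)$ and verify that the cyclic vector $\eta := v_{w_\bullet\lambda}^+\otimes v_\mu^+$ generating $L^\imath(\lambda,\mu)$ is fixed by $\psi^\imath$; this is precisely where the conditions on the parameters $\varsigma_i$ enter, forcing the correct normalization of $\Upsilon$ relative to $\eta$. Then, for any $b\in\dot{\RB}^\imath$, the image $b\cdot\eta$ is automatically $\psi^\imath$-invariant and lies in the standard $\A$-lattice of $L(\lambda)\otimes L(\mu)$. Expanding $b$ in terms of PBW-type generators of $_\A\dot{\U}^\imath$ built from the divided powers $F_i^{(n)}$ ($i\in\I_\circ$), the parabolic subalgebra associated with $\I_\bullet$, and elements of $_\A\U^+(w_\bullet)$, I would identify the leading term of $b\cdot\eta$ modulo $q^{-1}\mathcal{L}$ via Lusztig's property~(b) applied inside $L(\lambda)\otimes L(\mu)$, showing it is either zero or coincides with the leading term of a unique element of $\RB^\imath(\lambda,\mu)$. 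The uniqueness characterization of $\RB^\imath(\lambda,\mu)$ then forces $b\cdot\eta$ itself to equal $0$ or that canonical basis element. The kernel assertion follows because distinct elements of $\dot{\RB}^\imath$ having the same nonzero leading term would give rise to the same image in $\RB^\imath(\lambda,\mu)$, contradicting the uniqueness; hence the kernel is exactly $\operatorname{span}\{b\in\dot{\RB}^\imath : b\cdot\eta=0\}$.

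The hardest step will be the leading-term analysis. Unlike in Lusztig's purely lower-triangular setting, the generator $B_i = F_i + \varsigma_i\T_{w_\bullet}(E_{\tau i})K_i^{-1}$ for $i\in\I_\circ$ carries an "upper" correction term whose contribution to $b\cdot\eta$ must be shown to lie in $q^{-1}\mathcal{L}$ after twisting by $\Upsilon$; this requires delicate bookkeeping of the $\Upsilon$-asymptotic expansion interacting with iterated applications of $B_i$, particularly when $\I_\bullet$ is nonempty and $\T_{w_\bullet}(E_{\tau i})$ is a genuine higher-degree element. Establishing this triangularity is the technical core of Watanabe's proof in \cite{Wa23}, extending the quasi-split case that Bao--Wang could handle in \cite{BW18} where the correction terms simplify substantially. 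Once this triangularity is secured, everything else is formal from the standard uniqueness machinery for canonical bases.
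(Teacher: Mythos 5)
The paper does not prove Theorem~\ref{thm:stab}; it is imported wholesale from Watanabe \cite[Theorem~4.3.1]{Wa23}, with the explicit parameter choice supplied by \cite{Wa23a}. There is therefore no in-paper proof to compare your proposal against.

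As an account of how the proof proceeds in the literature, your outline correctly identifies the general framework from \cite{BW18}: the $\imath$-bar involution $\psi^\imath$ built from the quasi-$\mathcal{K}$-matrix $\Upsilon$, the role of the parameter constraints on $\varsigma_i$ in making the cyclic vector $\psi^\imath$-fixed, and the characterization of $\imath$canonical bases by bar-invariance together with asymptotic leading-term behavior inside the $\A$-lattice. You also locate the genuine difficulty accurately: in non-quasi-split type, $B_i = F_i + \varsigma_i\T_{w_\bullet}(E_{\tau i})K_i^{-1}$ has a nontrivial ``upper'' correction that destroys the lower-triangularity Lusztig's argument relies on, and the whole theorem hinges on recovering a usable triangular structure after twisting by $\Upsilon$. (Minor point: you write the cyclic vector as $v_{w_\bullet\lambda}^+\otimes v_\mu^+$, matching the paper's definition of $L^\imath(\lambda,\mu)$, while the theorem statement swaps $\lambda$ and $\mu$; this is an inconsistency in the source, not in your sketch.)

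The gap in your proposal is that it announces but does not discharge this core step. ``Delicate bookkeeping of the $\Upsilon$-asymptotic expansion interacting with iterated applications of $B_i$'' is exactly what fails to work by brute force when $\I_\bullet$ is nonempty — this is precisely why the general case resisted proof for several years after \cite{BW18} and why Watanabe's argument goes through the structure theory of based $\dot{\U}^\imath$-modules and an auxiliary reduction anchored in the explicit rank-one computations of \cite{Wa23a}, rather than through a direct asymptotic expansion of $b\cdot\eta$ in a PBW-type basis. Your final paragraph correctly flags this as the hard part, but as written the proposal is a plan for a proof rather than a proof: the step you label ``the technical core'' is stated as a goal, with no mechanism offered for how the $\Upsilon$-twist actually tames the contribution of $\T_{w_\bullet}(E_{\tau i})K_i^{-1}$ modulo $q^{-1}\mathcal{L}$.
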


One explicit choice of the parameters $\{\varsigma_i\mid i\in\I_\circ\}$ such that Theorem \ref{thm:stab} holds was given in \cite[Section 4]{Wa23a}. For the rest of the paper, we assume that the parameters are chosen in such a way.  

\subsubsection{A projection map}\label{sec:proj}

Recall that $w_\circ$ is the longest element in $W$, and $w_\bullet$ is the longest element in the parabolic subgroup $W_{I_\bullet}.$ Set $w^\bullet=w_\circ w_\bullet$. For $\lambda\in X$, we write $_\A\dot{\U}(w^\bullet)_>\one_\lambda={_\A\dot{\U}}{_\A\U^+(w^\bullet)_{>}}\one_\lambda$. 

We denote by $\iota_\lambda: {_\A\dot{\U}^\imath\one_{\overline{\lambda}}} \rightarrow {_\A\dot{\U}\one_\lambda}$ the map $ x \mapsto x \one_\lambda$. We define the composition
\begin{equation}\label{eq:pim}
p_{\imath,\lambda}=\pi_\lambda\circ\iota_\lambda:{_\A\dot{\U}^\imath\one_{\overline{\lambda}}}\xrightarrow{\iota_\lambda}{_\A\dot{\U}\one_\lambda}\xrightarrow{\pi_\lambda}{_\A\dot{\U}\one_\lambda}/_\A\dot{\U}(w^\bullet)_>\one_\lambda.
\end{equation}
Here $\pi_\lambda$ is the canonical projection map. It follows from \cite[Corollary 6.20]{BW18} that $p_{\imath,\lambda}$ is an isomorphism of $\A$-modules.

Let $A$ be any $\A$-algebra.  Since $_\A\dot{\U}(w^\bullet)_>\one_\lambda$ is a free $\A$-submodule of $_\A\dot{\U}\one_\lambda$ by \cite[Proposition~40.2.1]{Lu93}, we deduce that $A\otimes \big({_\A\dot{\U}\one_\lambda}/_\A\dot{\U}(w^\bullet)_>\one_\lambda\big)\cong {_A\dot{\U}\one_\lambda}/_A\dot{\U}(w^\bullet)_>\one_\lambda$ as $A$-modules.  Here $_A\dot{\U}(w^\bullet)_>\one_\lambda={_A\dot{\U}}{_A\U^+(w^\bullet)_{>}}\one_\lambda$. We deduce that the composition
\begin{equation}\label{eq:pa}
p_{\imath,\lambda}=\pi_\lambda\circ\iota_\lambda:{_A\dot{\U}^\imath\one_{\overline{\lambda}}}\xrightarrow{\iota_\lambda}{_A\dot{\U}\one_\lambda}\xrightarrow{\pi_\lambda}{_A\dot{\U}\one_\lambda}/_A\dot{\U}(w^\bullet)_>\one_\lambda
\end{equation}
is an isomorphism as $A$-modules. Here we abuse the notation by using the same notations to denote the maps after the base change. 

\subsection{Symmetric subgroups}

We recall the symmetric subgroups and their classifications following Springer \cite{Spr87}.

\subsubsection{Definitions}\label{sec:lkb}

Let $k$ be an algebraically closed field with characteristic not 2. Let $G_k$ be a connected reductive algebraic group defined over $k$. A \emph{symmetric pair} $(G_k,\theta_k)$ consists of a connected reductive group $G_k$, and an involution $\theta_k$ of $G_k$. Two symmetric pairs $(G_k,\theta_k)$ and $(G'_k,\theta_k')$ are called isomorphic if there exists an isomorphism between algebraic groups $f:G_k\rightarrow G'_k$, such that $f\circ\theta_k=\theta_k'\circ f$.

A torus $S_k$ of $G_k$ is called \emph{spit} if $\theta_k(s)=s^{-1}$ for any $s\in S_k$. Let $B_k$ be a Borel subgroup of $G_k$ such that $\theta_k(B_k)\cap B_k$ has the minimal dimension among all the choices of Borel subgroups. Take a maximal torus $T_k\subset B_k$, which contains a maximal split torus. Such a pair $(B_k,T_k)$ is called a \emph{split pair}. The existence of split pairs was proved in \cite[\S 1.3]{Spr87}.

Let us associate an $\imath$root datum to a symmetric pair. Let $(Y,X,(\alpha_i^\vee)_{i\in\I},(\alpha_i)_{i\in\I}))$ be the root datum associated to $(G_k,T_k,B_k)$. Since $\theta_k$ preserves $T_k$, it induces an involution on $X$. Following \cite{Spr87}, there is a Satake diagram $(\I=\I_\bullet\sqcup\I_\circ,\tau)$, such that $\theta_X(\alpha_i)=w_\bullet \alpha_{\tau i}$, for any $i\in\I$. Then $(Y,X,\{\alpha_i^\vee\}_{i\in\I},\{\alpha_i\}_{i\in\I},\theta)$ is an $\imath$root datum (Definition \ref{def:iroot}), which is called the $\imath$root datum associated with $(G_k,\theta_k)$. Note that different choices of the split pair $(B_k,T_k)$ give the isomorphic $\imath$root data. 

\subsubsection{Classifications}\label{sec:class}

When $G_k$ is simple, Springer \cite{Spr87} showed that each Satake diagram corresponds to a symmetric pair $(G_k,\theta_k)$, and hence obtained a bijection between the isomorphism classes of symmetric pairs (of simple groups) with the Satake diagrams. His classification used the classification of order two elements in simple groups. 

In Section \ref{sec:fci}, by giving functorial constructions of the symmetric subgroups associated with any $\imath$root data, we generalize Springer's classification to the symmetric pairs of any reductive groups in terms of the $\imath$root data.

\section{Symmetric subgroup schemes}

\subsection{The finiteness conditions}\label{sec:fd}

Recall  the canonical basis $\dot{\RB}$ of $\dot{\U}$ and the $\imath$canonical basis $\dot{\RB}^\imath$ of $\dot{\U}^\imath$. Recall the $\imath$weight lattice in Section \ref{sec:irt}. 

For $\lambda\in X$, recall the isomorphism of $\A$-modules from (\ref{eq:pim}): 
\[
p_{\imath,\lambda}=\pi_\lambda\circ\iota_\lambda:{_\A\dot{\U}^\imath\one_{\overline{\lambda}}}\xrightarrow{\iota_\lambda}{_\A\dot{\U}\one_\lambda}\xrightarrow{\pi_\lambda}{_\A\dot{\U}\one_\lambda}/_\A\dot{\U}(w^\bullet)_>\one_\lambda.
\]
 Set $s_\lambda=p_{\imath,\lambda}^{-1}\circ\pi_\lambda:{_\A\dot{\U}\one_\lambda}\longrightarrow{_\A\dot{\U}^\imath}\one_{\overline{\lambda}}.$ Then $s_\lambda\circ \iota_\lambda=id$. 

Let $$\dot{\RB}^\imath_{\overline{\lambda}} =\dot{\RB}^\imath\cap{\dot{\U}^\imath\one_{\overline{\lambda}}}\qquad\text{
 and }\qquad  \dot{\RB}_\lambda=\dot{\RB}\cap{\dot{\U}\one_\lambda}.$$ 
Then ${\dot{\U}^\imath\one_{\overline{\lambda}}}$ has basis $\dot{\RB}^\imath_{\overline{\lambda}}$ and ${\dot{\U}\one_\lambda}$ has basis $\dot{\RB}_\lambda$ (cf. \cite[Theorem 25.2.1]{Lu93} \cite[Theorem 6.17]{BW18}). 

We write  $$\iota_\lambda(b)=\sum_{b'\in\dot{\RB}_\lambda}\iota_{\lambda,b,b'}b', \quad \text{for } b\in\dot{\RB}^\imath_{\overline{\lambda}},  \iota_{\lambda,b,b'} \in \A; $$
$$s_\lambda(b)=\sum_{b'\in\dot{\RB}_{\overline{\lambda}}}s_{\lambda,b,b'}b',\quad \text{for }  b\in\dot{\RB}_\lambda,  s_{\lambda,b,b'} \in \A.$$
We view $_\A\dot{\U}^\imath$ as an $\A$-subalgebra of $_\A\widehat{\U}$ via the embedding  $x \mapsto \sum_{\lambda \in X}\iota_\lambda(x) $ as in \S \ref{sec:par} (a). We write 
\[
    b=\sum_{b'\in\dot{\RB}}\iota_{b,b'}b' \in {}_\A\widehat{\U}, \quad \text{for } b\in\dot{\RB}^\imath,  \iota_{b,b'} \in \A.
\]

\begin{lemma}\label{le:finit}
Let $\mu\in X$. Then for any $b'\in \dot{\RB}_\mu$, there are only finitely many $b \in \dot{\RB}^\imath_{\overline{\mu}}$, such that $\iota_{\mu,b,b'}\neq 0$. For any $b'\in\dot{\RB}^\imath_{\overline{\mu}}$, there are only finitely many $b\in\dot{\RB}_\mu$, such that $s_{\mu,b,b'}\neq 0$. In particular,  for any $b'\in \dot{\RB}$, there are only finitely many $b\in\dot{\RB}^\imath$, such that $\iota_{b,b'}\neq 0$.
\end{lemma}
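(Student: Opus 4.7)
The plan is to transport each assertion to finiteness of canonical basis elements acting non-trivially on a finite-dimensional module, by combining Theorem \ref{thm:stab} with properties (a), (b) of \S\ref{sec:defq}. The third (``in particular'') assertion follows from the first: each $b\in\dot{\RB}^\imath$ lies in a single piece $\dot{\U}^\imath\one_{\overline{\nu}}$, so in the embedding ${}_\A\dot{\U}^\imath\hookrightarrow{}_\A\widehat{\U}$ of \S\ref{sec:par}(a) only $\lambda$ with $\overline{\lambda}=\overline{\nu}$ contribute; hence $\iota_{b,b'}=\iota_{\lambda,b,b'}$ for $b'\in\dot{\RB}$ of right weight $\lambda$ (and $0$ otherwise), and the first assertion at $\mu=\lambda$ does the rest.

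For assertion (1), fix $b'\in\dot{\RB}_\mu$ and pick $\lambda_1,\lambda_2\in X^+$ with $\lambda_2-\lambda_1=\mu$ sufficiently dominant that $b'$ has non-zero image under $\pi_{\lambda_1,\lambda_2}\colon{}_\A\dot{\U}\one_\mu\to{}^\omega L(\lambda_1)\otimes L(\lambda_2)$, $u\one_\mu\mapsto u\cdot(v_{-\lambda_1}^-\otimes v_{\lambda_2}^+)$. Property (b) ensures $\pi_{\lambda_1,\lambda_2}$ sends $\dot{\RB}_\mu$ to $\{0\}\cup\RB(\lambda_1,\lambda_2)$ with pairwise distinct non-zero images, so for $b\in\dot{\RB}^\imath_{\overline{\mu}}$ the coefficient of $\pi_{\lambda_1,\lambda_2}(b')$ in
\[
b\cdot(v_{-\lambda_1}^-\otimes v_{\lambda_2}^+)=\pi_{\lambda_1,\lambda_2}(\iota_\mu(b))=\sum_{b''\in\dot{\RB}_\mu}\iota_{\mu,b,b''}\,\pi_{\lambda_1,\lambda_2}(b'')
\]
equals $\iota_{\mu,b,b'}$. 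Hence $\iota_{\mu,b,b'}\neq 0$ forces $b$ to act non-trivially on $M:={}^\omega L(\lambda_1)\otimes L(\lambda_2)$. Decomposing $M$ as a $\U$-module into simples $L(\xi)$ and noting that $L(\xi)=\U\cdot v^+_{w_\bullet\xi}=L^\imath(\xi,0)$ ($\U$-irreducibles being cyclic on any non-zero vector), Theorem \ref{thm:stab} applied to each $L^\imath(\xi,0)$ bounds the $b\in\dot{\RB}^\imath$ acting non-trivially on $M$ to a finite set.

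For assertion (2), fix $b'\in\dot{\RB}^\imath_{\overline{\mu}}$ and choose $\nu_1,\nu_2\in X^+$ with $w_\bullet\nu_1+\nu_2=\mu$ (replacing $\mu$ within its $X_\imath$-coset if necessary, since $\dot{\U}^\imath\one_{\overline{\mu}}$ depends only on $\overline{\mu}$) sufficiently dominant that $b'$ has non-zero image in $L^\imath(\nu_1,\nu_2)$ under the strong-stability map $\phi\colon u\mapsto u\cdot(v^+_{w_\bullet\nu_1}\otimes v^+_{\nu_2})$. For each positive root $\beta\notin\mathbb{Z}[\I_\bullet]$, $w_\bullet^{-1}\beta$ is again a positive root of that form (as $w_\bullet$ permutes such roots), so $E_\beta v^+_{w_\bullet\nu_1}=\T_{w_\bullet}(E_{w_\bullet^{-1}\beta}v^+_{\nu_1})=0$; thus ${}_\A\U^+(w^\bullet)_>$ annihilates $v:=v^+_{w_\bullet\nu_1}\otimes v^+_{\nu_2}$, and the map $\tilde{\phi}\colon{}_\A\dot{\U}\one_\mu\to L(\nu_1)\otimes L(\nu_2)$, $u\one_\mu\mapsto u\cdot v$, factors through $\pi_\mu$ and coincides with $\phi\circ s_\mu$. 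For $b\in\dot{\RB}_\mu$ this gives $\tilde{\phi}(b)=\sum_{b''\in\dot{\RB}^\imath_{\overline{\mu}}}s_{\mu,b,b''}\,\phi(b'')$, and Theorem \ref{thm:stab} makes the $\phi(b'')$'s pairwise distinct elements of $\RB^\imath(\nu_1,\nu_2)$ (or zero), so the coefficient of $\phi(b')$ equals $s_{\mu,b,b'}$. Hence $s_{\mu,b,b'}\neq 0$ implies $\tilde{\phi}(b)\neq 0$, and property (a) applied to $L(\nu_1)\otimes L(\nu_2)$ gives the desired finiteness.

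The hardest step will be arranging the parameters in (2): for general $\mu\in X$ the decomposition $\mu=w_\bullet\nu_1+\nu_2$ with $\nu_1,\nu_2\in X^+$ may not exist on the nose, so one must work with a better representative of $\overline{\mu}\in X_\imath$, exploiting the $X_\imath$-invariance of $\dot{\U}^\imath\one_{\overline{\mu}}$. The verification that ${}_\A\U^+(w^\bullet)_>\cdot v^+_{w_\bullet\nu_1}=0$, which underpins the factorization of $\tilde{\phi}$ through $\pi_\mu$, is the other delicate point, but follows cleanly from Lusztig's braid relations together with the observation that $w_\bullet$ permutes the positive roots that are not in $\mathbb{Z}[\I_\bullet]$.
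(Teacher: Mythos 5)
Your handling of the second assertion and your deduction of the ``in particular'' part from the first are both sound and match the paper's approach; the extra computation you supply for why ${}_\A\U^+(w^\bullet)_>$ kills $v^+_{w_\bullet\nu_1}\otimes v^+_{\nu_2}$ (so that the evaluation factors through $s_\mu$) is exactly the fact the paper states without unwinding, though your formula $E_\beta v^+_{w_\bullet\nu_1}=\T_{w_\bullet}(E_{w_\bullet^{-1}\beta}v^+_{\nu_1})$ is only morally correct and would need a more careful braid-group computation.

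However, your argument for the first assertion has a genuine gap. After concluding that $\iota_{\mu,b,b'}\neq 0$ forces $b$ to act non-trivially on $M={}^\omega L(\lambda_1)\otimes L(\lambda_2)$, you decompose $M$ into $\U$-irreducibles $L(\xi)=L^\imath(\xi,0)$ and invoke Theorem~\ref{thm:stab} on each summand to ``bound the $b\in\dot{\RB}^\imath$ acting non-trivially on $M$ to a finite set''. But Theorem~\ref{thm:stab} only controls the kernel of the specific evaluation map $u\mapsto u\cdot v^+_{w_\bullet\xi}$ at the distinguished cyclic vector; that kernel is merely a left ideal, so an element $b$ that kills $v^+_{w_\bullet\xi}$ need not annihilate all of $L^\imath(\xi,0)=L(\xi)$. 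Consequently Theorem~\ref{thm:stab} does \emph{not} give finiteness of $\{b\in\dot{\RB}^\imath : b\mid_{L(\xi)}\not\equiv 0\}$, and your reduction fails.

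The missing ingredient, which the paper uses, is the $\U^\imath$-module isomorphism $\mathcal{T}\colon L(\tau\mu_1)\to{}^\omega L(\mu_1)$ of \cite[Theorem 4.18]{BW18}, sending the canonical basis element $v^{\bullet}_{\tau\mu_1}$ of weight $w_\bullet(\tau\mu_1)$ to $v^-_{-\mu_1}$. Tensoring with $L(\mu_2)$, this identifies $v^-_{-\mu_1}\otimes v^+_{\mu_2}$ with the cyclic vector $v^{\bullet}_{\tau\mu_1}\otimes v^+_{\mu_2}$ of $L^\imath(\tau\mu_1,\mu_2)$. Your computation correctly shows $\iota_{\mu,b,b'}\neq 0$ implies $b\cdot(v^-_{-\mu_1}\otimes v^+_{\mu_2})\neq 0$; the paper then transports this non-vanishing across $\mathcal{T}^{-1}\otimes\mathrm{id}$ to get $b\cdot(v^{\bullet}_{\tau\mu_1}\otimes v^+_{\mu_2})\neq 0$, and now Theorem~\ref{thm:stab} applies to this particular vector and forbids infinitely many such $b$. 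Without this $\U^\imath$-isomorphism there is no way to put the action on ${}^\omega L(\lambda_1)\otimes L(\lambda_2)$ into the Strong Stability setting, and your argument does not close.
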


\begin{proof}

Suppose there exists some $b'\in\dot{\RB}_\mu$, such that there are infinitely many $b\in\dot{\RB}^\imath_{\overline{\mu}}$, such that $\iota_{\mu,b,b'}\neq 0$. 

Take $\mu_1, \mu_2\in X^+$, such that $b'\cdot v_{-\mu_1}^-\otimes v_{\mu_2}^+\neq 0$ in $^\omega L(\mu_1)\otimes L(\mu_2)$. Then by the assumption, we get infinitely many $b\in\dot{\RB}^\imath_{\overline{\mu}}$, such that $b\cdot v_{-\mu_1}\otimes v_{\mu_2}\neq 0$.

By \cite[Theorem 4.18]{BW18} we have $\U^\imath$-module isomorphism $$\mathcal{T}:{ L(\tau\mu_1)}\rightarrow {^\omega L(\mu_1)}$$ sending $v_{\tau\mu_1}^{\bullet}$ to $v_{-\mu_1}^-$. Here $v_{\tau\mu_1}^{\bullet}\in{L(\tau\mu_1)}$ is the unique canonical basis element of weight $w_\bullet(\tau\mu_1)$. Then we have the $\U^\imath$-module isomorphism $$\mathcal{T}^{-1}\otimes id:{^\omega L(\mu_1)}\otimes L(\mu_2)\rightarrow L(\tau\mu_1)\otimes L(\mu_2)$$ sending $v_{-\mu_1}^{-}\otimes v_{\mu_2}^+$ to $v_{\tau \mu_1}^{\bullet}\otimes v_{\mu_2}^+$. Hence via this isomorphism, we get infinitely many $\imath$canonical basis elements $b$, with $b\cdot v_{\tau \mu_1}^{\bullet}\otimes v_{\mu_2}^+\neq 0$. This contradicts Theorem \ref{thm:stab}. We proved the first claim.

For the second claim, suppose there exists some $b'\in \dot{\RB}^\imath_{\overline{\mu}}$, such that there are infinitely many $b\in\dot{\RB}_\mu$, with $s_{\mu,b,b'}\neq 0$. We take $\mu_1,\mu_2\in X^+$, such that $b'\cdot v^+_{w_\bullet\mu_1}\otimes v^+_{\mu_2}\neq 0$ in $L(\mu_1)\otimes L(\mu_2)$. Note that for any $x\in {_\A\dot{\U}}\one_\mu$, we have $x\cdot v_{w_\bullet\mu_1}^+\otimes v_{\mu_2}^+=s_\mu(x)\cdot v_{w_\bullet\mu_1}^+\otimes v_{\mu_2}^+$. Hence for any $b\in\dot{\RB}_\mu$, we have $b\cdot v_{w_\bullet\mu_1}^+\otimes v_{\mu_2}^+\neq 0$ whenever $s_{\mu,b,b'}\neq0$. Therefore we get infinitely many canonical basis element $b$, with $b\mid_{L(\mu_1)\otimes L(\mu_2)}\not\equiv 0$, which is not possible by \cite[29.1.6 (c)]{Lu93}. This proves the second claim.
\end{proof}

\begin{corollary}\label{cor:fimu}
For any $b',b''\in\dot{\RB}^\imath$, let us write $b'\cdot b''=\sum_{b\in\dot{\RB}^\imath}m_{b',b''}^b b$. Then for any $b\in\dot{\RB}^\imath$, there are only finitely many pairs $(b',b'')$ in $\dot{\RB}^\imath\times \dot{\RB}^\imath$, such that $m_{b',b''}^b\neq 0$.
\end{corollary}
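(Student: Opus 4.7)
The plan is to choose a finite-dimensional module $L^\imath(\mu_1, \mu_2)$ on which our fixed $b$ acts nontrivially, and then use Theorem~\ref{thm:stab} to force any pair $(b', b'')$ with $m_{b',b''}^b \neq 0$ into a finite subset of $\dot{\RB}^\imath \times \dot{\RB}^\imath$. Fix $b \in \dot{\RB}^\imath$. By the asymptotic behavior of the $\imath$canonical basis---the same mechanism used in the proof of the second claim of Lemma~\ref{le:finit}---choose $\mu_1, \mu_2 \in X^+$ such that $b \cdot v \neq 0$ in $L^\imath(\mu_1, \mu_2)$, where $v := v^+_{w_\bullet \mu_1} \otimes v^+_{\mu_2}$. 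By Theorem~\ref{thm:stab}, the image $b \cdot v$ is then a canonical basis element of $\RB^\imath(\mu_1, \mu_2)$.

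The key intermediate step is to establish an $\imath$-analog of the canonical-basis finiteness property from Section~\ref{sec:defq}(a): only finitely many $b' \in \dot{\RB}^\imath$ act nontrivially on $L^\imath(\mu_1, \mu_2)$. Using the embedding ${}_\A \dot{\U}^\imath \hookrightarrow {}_\A \widehat{\U}$ from Section~\ref{sec:par}(a), expand $b' = \sum_{b_1 \in \dot{\RB}} \iota_{b', b_1} b_1$. Since $L^\imath(\mu_1, \mu_2) \subset L(\mu_1) \otimes L(\mu_2)$ is a finite-dimensional $\U$-module, hence a direct sum of simples, only finitely many $b_1 \in \dot{\RB}$ act nontrivially on it by Section~\ref{sec:defq}(a). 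If $b'$ acts nontrivially on $L^\imath(\mu_1, \mu_2)$, then at least one such $b_1$ must satisfy $\iota_{b', b_1} \neq 0$. By the final clause of Lemma~\ref{le:finit}, for each such $b_1$ only finitely many $b' \in \dot{\RB}^\imath$ satisfy $\iota_{b', b_1} \neq 0$; combining yields the claim.

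To finish, suppose $m_{b', b''}^b \neq 0$. Applying the evaluation map $u \mapsto u \cdot v$ to $b' b'' = \sum_{\tilde{b} \in \dot{\RB}^\imath} m_{b', b''}^{\tilde{b}} \tilde{b}$ and invoking Theorem~\ref{thm:stab}, the nonzero images $\tilde{b} \cdot v$ are distinct canonical basis elements of $\RB^\imath(\mu_1, \mu_2)$, so $m_{b', b''}^b$ equals the coefficient of $b \cdot v$ in the canonical-basis expansion of $b' b'' \cdot v = b' \cdot (b'' \cdot v)$. Therefore $b'' \cdot v \neq 0$, which by Theorem~\ref{thm:stab} restricts $b''$ to the finite subset of $\dot{\RB}^\imath$ bijecting to $\RB^\imath(\mu_1, \mu_2)$, and $b'$ acts nontrivially on $L^\imath(\mu_1, \mu_2)$, which by the previous paragraph places $b'$ in a finite subset of $\dot{\RB}^\imath$. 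Only finitely many pairs $(b', b'')$ can contribute, proving the corollary. The principal obstacle is the intermediate finiteness claim, which requires passing from canonical-basis finiteness on the finite-dimensional module $L^\imath(\mu_1, \mu_2)$ to $\imath$canonical-basis finiteness through the embedding into ${}_\A\widehat{\U}$ combined with Lemma~\ref{le:finit}, rather than arguing intrinsically within $\dot{\U}^\imath$.
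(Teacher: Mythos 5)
Your proof is correct and takes essentially the same approach as the paper: choose $\mu_1,\mu_2$ so that $b$ acts nontrivially, deduce from Theorem~\ref{thm:stab} that $b'b''\cdot v\neq 0$, and then derive finiteness for each factor via Lusztig's finiteness property together with Lemma~\ref{le:finit}. The only slight variation is that you bound $b''$ directly by the injectivity-off-kernel statement in Theorem~\ref{thm:stab} (at most $|\RB^\imath(\mu_1,\mu_2)|$ survivors), whereas the paper uniformly treats both $b'$ and $b''$ as elements acting nontrivially on $L(\mu_1)\otimes L(\mu_2)$ and applies \cite[29.1.6(c)]{Lu93} plus Lemma~\ref{le:finit} to both; your explicit spelling out of the intermediate finiteness claim is left implicit in the paper but is the same mechanism.
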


\begin{proof}
For any $b\in\dot{\RB}^\imath$, we choose $\mu_1,\mu_2\in X^+$, such that $b\cdot v^+_{w_\bullet\mu_1}\otimes v^+_{\mu_2}\neq 0$ in $L(\mu_1)\otimes L(\mu_2)$. Suppose $m_{b',b''}^b\neq 0$, for some $b',b''$ in $\dot{\RB}^\imath$. Then thanks to Theorem \ref{thm:stab}, we have $(b'\cdot b'')\cdot v^+_{w_\bullet\mu_1}\otimes v^+_{\mu_2}\neq 0$. In particular, $b'\mid_{L(\mu_1)\otimes L(\mu_2)}\not\equiv 0$, and $b''\mid_{L(\mu_1)\otimes L(\mu_2)}\not\equiv 0$. Finally, by \cite[29.1.6 (c)]{Lu93}, there are only finitely many canonical basis element $c\in\dot{\RB}$, such that $c\mid_{L(\mu_1)\otimes L(\mu_2)}\not\equiv 0$. Then thanks to Lemma \ref{le:finit}, we only have finitely many such pairs $(b',b'')$ in $\dot{\RB}^\imath\times\dot{\RB}^\imath$.
\end{proof}

\subsection{The definition}\label{sec:def}
Let $A$ be an $\A$-algebra. We follow the same notations and conventions as in Section \ref{sec:nocp}. We write $_A\dot{\U}^\imath=A\otimes_\A\big({_\A\dot{\U}^\imath}\big)$. We will abuse the notations, and use $\dot{\RB}^\imath$ to denote the basis of $_A\dot{\U}^\imath$, consisting of the image of the $\imath$canonical basis elements after base change. This should not cause any confusion.

Let $_A\widehat{\mathrm{U}}^\imath$ be the $A$-module consisting of formal linear combinations
\[ \sum_{b\in\dot{\RB}^\imath} n_b b, \quad n_b\in A.
\]
Thanks to Lemma \ref{le:finit}, $_A\widehat{\mathrm{U}}^\imath$ can be naturally embedded into $_A\widehat{\mathrm{U}}$. By Corollary \ref{cor:fimu}, the product structure on $_A\widehat{\U}^\imath$ is well defined, and the embedding $_A\widehat{\U}^\imath\hookrightarrow{_A\widehat{\U}}$ is compatible with products.

Define $_A\widehat{\U}^{\imath,1}$ be the $A$-module consisting of all the formal linear combinations
\[
\sum_{(a,a')\in \dot{\RB}^\imath\times \dot{\RB}} n_{a,a'}a\otimes a', \quad n_{a,a'}\in A.
\]
 Again, by Lemma \ref{le:finit}, we have natural embedding $_A\widehat{\U}^{\imath,1}\hookrightarrow{_A\widehat{\U}^{(2)}}$. Thanks to the Corollary \ref{cor:fimu} and \cite[Lemma 1.8]{Lu09}, one can define a product structure on $_A\widehat{\U}^{\imath,1}$ in an evident way. Then the embedding is moreover compatible with products.

Since $\U^\imath$ is a right coideal subalgebra of $\U$, it follows that $\WD:{_A\widehat{\U}}\rightarrow{_A\widehat{\U}}^{(2)}$ restricts to an $A$-algebra homomorphism
\begin{equation*} 
\WD: {_A\dot{\U}^\imath}\longrightarrow {_A\widehat{\U}^{\imath,1}}.
\end{equation*}

Define $\RO_A^\imath$ be the $A$-submodule of $(_A\dot{\U}^\imath)^*=\text{Hom}_A({_A\dot{\U}^\imath},A)$, spanned by the \emph{dual $\imath$canonical basis} $\{b^*\mid b\in \dot{\RB}^\imath\}$, where $b^*$ stands for the $A$-linear maps sending $b'$ to $\delta_{b', b}$, for any $b'\in\dot{\RB}^\imath$.

Recall from Section\ref{sec:nocp} that $\RO_A$ is an $A$-submodule of $_A\dot{\U}^*$, spanned by the dual canonical basis. Define the $A$-linear map 
\[
r:\RO_A\rightarrow\RO_A^\imath, \quad b^* \mapsto \sum_{b_1\in\dot{\RB}^\imath}\iota_{b_1,b}b_1^*, \quad \text{for }b\in\dot{\RB}.
\]
 The summation is finite thanks to Lemma \ref{le:finit}. Then it is clear that for any $f\in \RO_A$, we have $r(f)=\hat{f}\mid_{_A\dot{\U}^\imath}$. Here $\hat{f}$ is the extension of $f$ to $_A\widehat{\U}$.

For any $\mu\in X$, let $\RO_{A,\mu}$ be the $A$-submodule of $\RO_A$, spanned by all $b^*$, such that $b\in \dot{\RB}\cap {_A\dot{\U}\one_\mu}$. Then $\RO_A=\bigoplus_{\mu\in X}\RO_{A,\mu}$. Moreover, we have $\RO_{A,\mu}\cdot\RO_{A,\mu'}\subset \RO_{A,\mu+\mu'}$. Via the restriction, we have a natural map $\RO_A\rightarrow(_A\dot{\U}\one_\mu)^*$. Under this restriction, the submodule $\RO_{A,\mu}$ is sent injectively to an $A$-submodule of $(_A\dot{\U}\one_\mu)^*$. We identify $\RO_{A,\mu}$ with its image in $(_A\dot{\U}\one_\mu)^*$.

Similarly, for any $\zeta\in X_\imath$, define $\RO_{A,\zeta}^\imath$ be the $A$-submodule of $\RO_A^\imath$, spanned by all the elements $b^*$, such that $b\in \dot{\RB}^\imath\cap {_A\dot{\U}^\imath\one_\zeta}$. Then $\RO_A^\imath=\bigoplus_{\zeta\in X_\imath}\RO_{A,\zeta}^\imath$. Similarly, the submodule $\RO_{A,\zeta}^\imath$ can be identified with an $A$-submodule of $(_A\dot{\U}^\imath\one_\zeta)^*$ via restriction.

By Section \ref{sec:fd} we have $A$-linear maps
\begin{equation*}
\begin{tikzcd}
{_A\dot{\U}\one_\mu}\arrow[r,bend right,"s_\mu"']& {_A\dot{\U}^\imath\one_{\overline{\mu}}} \arrow[l,"\iota_\mu"'], 
\end{tikzcd}, \quad \text{ such that }s_\mu\circ\iota_\mu=id, \quad \text{for any } \mu\in X.
\end{equation*}

Taking the linear dual of these maps, and using Lemma \ref{le:finit}, we have
\begin{equation*}
\begin{tikzcd}
\RO_{A,\mu}\arrow[r,"\iota_\mu^*"] & \RO_{A,\overline{\mu}}^\imath \arrow[l,bend left,"s_\mu^*"]
\end{tikzcd}, \quad \text{such that }\iota_\mu^*\circ s_\mu^*=id.
\end{equation*}
In particular, we deduce that $\iota_\mu^*$ is surjective, and $s_\mu^*$ is injective.

It follows from the definition that $\iota_\mu^*(f)=r(f)$, for any $f\in\RO_{A,\mu}$. Hence we deduce 

(a) {\em the map $r:\RO_A\rightarrow\RO_A^\imath$ is surjective.}

For the rest of this section, we assume that $A$ is commutative and $q$ acts by 1.

Recall from Section \ref{sec:nocp} that $(\RO_A,\delta, \sigma,\epsilon)$ is a commutative $A$-Hopf algebra.



\begin{theorem}\label{thm:Hopfi}
The $A$-module $\RO_A^\imath$ has a structure of a commutative $A$-Hopf algebra, such that the surjection $r:\RO_A\rightarrow\RO_A^\imath$ is a Hopf algebra homomorphism. 
\end{theorem}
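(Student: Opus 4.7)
The plan is to show that $\ker r \subset \RO_A$ is a Hopf ideal of the commutative Hopf algebra $\RO_A$; then $\RO_A^\imath = \RO_A/\ker r$ inherits a unique commutative Hopf algebra structure making $r$ a Hopf algebra homomorphism. Throughout I characterize $\ker r$ as the set of $f \in \RO_A$ whose extension $\hat f$ to ${}_A\widehat{\U}$ vanishes on $\iota({}_A\dot{\U}^\imath) \subset {}_A\widehat{\U}$; by Lemma~\ref{le:finit} (together with the fact that $f$ has finite support on $\dot\RB$), such $f$ also annihilate the larger subset $\iota({}_A\widehat{\U}^\imath)$, which in particular contains the unit $\sum_\lambda \one_\lambda = \iota(\sum_\zeta \one_\zeta)$.

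For the \textbf{ideal property}, given $f \in \ker r$, $g \in \RO_A$, and $y \in {}_A\dot{\U}^\imath$, the product diagram yields $\widehat{fg}(\iota(y)) = (f \otimes g)(\WD(\iota(y)))$. The right coideal condition places $\WD(\iota(y))$ in ${}_A\widehat{\U}^{\imath,1}$, expanded as $\sum n_{a,a'}\,\iota(a) \otimes a'$ with $a \in \dot{\RB}^\imath$ and $a' \in \dot{\RB}$; each contribution $\hat f(\iota(a))\,g(a')$ vanishes, so $fg \in \ker r$, and commutativity of $\RO_A$ gives $gf \in \ker r$. For the \textbf{coideal property} and \textbf{counit}, the coproduct diagram together with the subalgebra property of ${}_A\dot{\U}^\imath$ give
\[
(r \otimes r)\delta(f)(x \otimes y) = \delta(f)(\iota(x) \otimes \iota(y)) = \hat f(\iota(x)\iota(y)) = \hat f(\iota(xy)) = 0
\]
for all $x,y \in {}_A\dot{\U}^\imath$; non-degeneracy of the evaluation pairing on dual bases forces $(r\otimes r)\delta(f) = 0$, and $\epsilon(f) = \hat f(\sum_\lambda \one_\lambda) = 0$ follows directly from the extended vanishing.

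The \textbf{main obstacle} is antipode stability, $\sigma(\ker r) \subset \ker r$. Because $\U^\imath$ is only a right coideal subalgebra of $\U$, the antipode $\hat S$ need not preserve $\iota({}_A\dot{\U}^\imath)$ at the quantum level and no direct calculation is available. I will reduce to the classical limit: $\RO_A^\imath = A \otimes_\BZ \RO_\BZ^\imath$ is $A$-free (by definition it has the dual $\imath$canonical basis), analogous identities hold for $\RO_A$, and by flatness of $\BZ$-free modules $\ker r_A = A \otimes_\BZ \ker r_\BZ$, so the statement reduces to $A = \BZ$; then torsion-freeness of $\RO_\BZ^\imath$ identifies $\ker r_\BZ$ with $\RO_\BZ \cap \ker r_\BQ$ inside $\RO_\BQ$, reducing further to $A = \BQ$. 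Over the field $\BQ$, the previous two steps already show $\ker r_\BQ$ is a bi-ideal of the commutative Hopf algebra $\RO_\BQ$, and the classical theorem that a closed submonoid of an affine algebraic group over a field is automatically a subgroup scheme (by the translation argument $L_h: Z \to Z$ being an isomorphism onto a closed subscheme of the same dimension) then gives that $\ker r_\BQ$ is a Hopf ideal, so $\sigma_\BQ(\ker r_\BQ) \subset \ker r_\BQ$. Lifting via torsion-freeness of $\RO_\BZ^\imath$ and base changing to arbitrary $A$ completes the verification that $\ker r$ is a Hopf ideal.
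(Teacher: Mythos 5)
Your argument that $\ker r$ is a bi-ideal (ideal, coideal, counit) is correct and matches the paper's proof, which likewise uses the right-coideal inclusion $\WD({}_A\dot{\U}^\imath)\subset {}_A\widehat{\U}^{\imath,1}$ for the ideal property and product-compatibility of the embedding ${}_A\dot{\U}^\imath\hookrightarrow {}_A\widehat{\U}$ for the coideal property.

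Where you diverge is the antipode. The paper simply asserts that $\sigma(I_A)\subset I_A$ is ``direct to check'', whereas you claim no direct calculation is available because $\U^\imath$ is merely a coideal. That claim is an overstatement: the theorem is stated under the running hypothesis, fixed in the sentence preceding it, that $q$ acts by $1$ in $A$. At $q=1$ the $K$-factors in the defining formulas trivialize in the modified algebra, and on generators one computes $\widehat S\bigl(\sum_{\overline\lambda=\zeta}B_i\one_\lambda\bigr)=-\sum_{\overline\mu=-\zeta+\overline{\alpha_i}}B_i\one_\mu$ (using that $-\alpha_i$ and $w_\bullet\alpha_{\tau i}$ have the same image in $X_\imath$), together with the analogous statements for $E_j\one_\zeta, F_j\one_\zeta$ $(j\in\I_\bullet)$ and $\one_\zeta$. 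So $\widehat S$ does preserve the embedded copy of ${}_A\dot{\U}^\imath$ at $q=1$, and $\sigma(I_A)\subset I_A$ follows by dualizing; this is presumably what the paper has in mind. Your obstruction is real only at generic $q$, which is not the regime of this theorem.

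Your alternative route — reduce to $A=\BZ$ by flatness, then to $A=\BQ$ by torsion-freeness of $\RO_\BZ^\imath$, then invoke the fact that a closed algebraic submonoid scheme of an algebraic group over a field is a subgroup scheme — is a legitimate, if heavier, way to conclude. The two reduction steps are handled correctly. One caveat on the final step: the parenthetical translation argument you sketch only directly produces $M(\bar k)=M^{-1}(\bar k)$, hence $M_{\mathrm{red}}=(M^{-1})_{\mathrm{red}}$; to conclude the scheme-theoretic equality $M=M^{-1}$ (equivalently, that $\ker r_\BQ$ is $\sigma$-stable, not just that its radical is), you need either a reference to the full statement covering possibly non-reduced closed submonoids, or a separate argument that $\RO_\BQ^\imath$ is reduced (and the reducedness argument in Proposition~\ref{prop:GAi} cannot be used here, since in characteristic $0$ it invokes Cartier's theorem and thus already presupposes the Hopf structure you are trying to establish). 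So the idea is sound, but this step needs a slightly more careful citation or argument.
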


\begin{proof}
Note that the kernel $I_A$ of $r$ consists of linear forms $f$, such that $\hat{f}\mid_{{{}_A\dot{\U}^\imath}} = 0$. We firstly show that $I_A$ is an ideal. Take any $f\in I_A$, and $g\in \RO_A$. For any $x\in{_A\dot{\U}^\imath}$, we have $\widehat{fg}(x)=f\otimes g\circ(\widehat{\Delta}(x))=0$. The last equality follows from the fact $\widehat{\Delta}({_A\dot{\U}^\imath})\subset {_A\widehat{\U}^{\imath,1}}$. Hence $fg$ belongs to $I_A$. Since $\RO_A$ is commutative, $I_A$ is a two-sided ideal.

Since the embedding $_A\dot{\U}^\imath\hookrightarrow{_A\widehat{\U}}$ is compatible with products, it follows that the comultiplication $\delta:\RO_A\rightarrow\RO_A\otimes\RO_A$ induces $\RO_A/I_A\rightarrow \RO_A/I_A\otimes\RO_A/I_A$. Finally, it is direct to check that $\sigma(I_A)\subset I_A$, and $\epsilon(I_A)=0$. Hence $\RO_A/I_A$ admits a commutative $A$-Hopf algebra structure. We then endow $\RO_A^\imath$ with the commutative $A$-Hopf algebra structure via the isomorphism $\RO_A^\imath\cong \RO_A/I_A$ and the theorem is proved.
\end{proof}

Since $\RO_A^\imath$ is a commutative Hopf algebra over $A$, the set $\text{Hom}_{A-\text{alg}}(\RO_A^\imath, A)$ of $A$-algebra homomorphisms $\RO_A^\imath\rightarrow A$ preserving 1, has a group structure.

By definition, $\text{Hom}_{A-\text{alg}}(\RO_A^\imath,A)$ is an $A$-submodule of $\RO_A^{\imath,*}$, the $A$-linear duals of $\RO_A^\imath$. There is an ($A$-linear) bijective map  from $\RO_A^{\imath,*}$ to $_A\widehat{\U}^\imath$, given by $\phi\mapsto \sum_{b\in\dot{\RB}^\imath}\phi(b^*)b$. We write $G_A^\imath$ to be the image of $\text{Hom}_{A-\text{alg}}(\RO_A^\imath,A)$ under this bijection. Via the embedding $_A\widehat{\U}^\imath\hookrightarrow{_A\widehat{\U}}$, we view $G_A^\imath$ as a subset of ${_A\widehat{\U}}$. 

Recall the construction for the group $G_A$ from Section \ref{sec:regpq}. The following claim is immediate. 

(b) {\em As subsets of $_A\widehat{\U}$, we have $G_A^\imath={_A\widehat{\U}^\imath}\cap G_A$, which is a subgroup of $G_A$. And the bijection $\text{Hom}_{A-\text{alg}}(\RO_A^\imath,A)\xrightarrow{\sim} G_A^\imath$ is moreover a group isomorphism. } 

\begin{definition}
We define $\G^\imath$ to be the $\BZ$-group scheme, by setting $\G^\imath(A)=G_A^\imath$, for any $\BZ$-algebra A. We have $\G^\imath \cong Spec\, \RO_{\BZ}^\imath  $ as affine group schemes.
\end{definition}

By the claim (a), $\G^\imath$ is a closed subgroup scheme of $\G$, where $\G$ denotes the Chevalley group scheme over $\BZ$ associated to (the root data) of $G$.



We next show that $\G^\imath_{\BZ[2^{-1}]}\rightarrow Spec\,\BZ[2^{-1}]$ has reduced geometric fibres.
\begin{proposition}\label{prop:GAi}
Suppose $A$ is an integral domain with characteristic not 2. Then $\RO_A^\imath$ is a reduced $A$-algebra.
\end{proposition}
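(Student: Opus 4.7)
The plan is to reduce to the case of an algebraically closed field and then to deduce reducedness by realising $\RO_k^\imath$ as the coordinate ring of a smooth fixed-point subscheme of $G_k$.

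\emph{Reduction to algebraically closed fields.} By construction $\RO_\BZ^\imath$ is a free $\BZ$-module with basis the dual $\imath$canonical basis, so $\RO_A^\imath$ is a free (hence flat) $A$-module. Letting $k$ be an algebraic closure of $\mathrm{Frac}(A)$, flatness yields an injection $\RO_A^\imath \hookrightarrow \RO_k^\imath$, along which reducedness descends. Thus it suffices to prove the proposition for $A = k$ algebraically closed of characteristic $\neq 2$.

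\emph{Argument in the field case.} Here $\RO_k^\imath = \RO_k / I_k$, where $I_k = \ker r$ for the Hopf algebra surjection $r$ of Theorem \ref{thm:Hopfi}, and $\RO_k$ is the reduced coordinate ring of the smooth reductive group $G_k$. Let $\theta_k$ be the involution of $G_k$ associated with the $\imath$root datum (Section \ref{sec:lkb}), and let $J_k \subseteq \RO_k$ be the defining ideal of the fixed-point subscheme $G_k^{\theta_k}$. Since $2 = |\theta_k|$ is invertible in $k$, a classical theorem of Iversen gives that $G_k^{\theta_k}$ is smooth over $k$, so $J_k$ is radical and $\RO_k / J_k$ is reduced. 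I would then establish the equality $I_k = J_k$, which exhibits $\RO_k^\imath = \RO_k / J_k$ as a reduced ring. In characteristic zero one may bypass this entirely by invoking Cartier's theorem that every commutative Hopf algebra over a field of characteristic zero is reduced.

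\emph{Main obstacle.} The hard step is the identification $I_k = J_k$, equivalently the scheme-theoretic equality $\G_k^\imath = G_k^{\theta_k}$. The inclusion $I_k \supseteq J_k$ (scheme-theoretically, $\G_k^\imath \subseteq G_k^{\theta_k}$) requires analysing, functorially in the test $k$-algebra $R$, how the $\imath$canonical basis generators $B_i = F_i + \varsigma_i \T_{w_\bullet}(E_{\tau i}) K_i^{-1}$ interact with $\theta_k$; the particular choice of parameters $\varsigma_i$ fixed by Theorem \ref{thm:stab} is precisely what makes this compatibility hold in positive characteristic. The reverse inclusion $I_k \subseteq J_k$ — that every $\theta_R$-fixed $R$-point of $G_k$ lies in $\G_k^\imath(R)$ — follows once one has enough $k$-points in $\G_k^\imath$ to match the smooth reduced group $G_k^{\theta_k}$, using the specialisation of $\U^\imath$ at $q = 1$ to the enveloping algebra of $\lie(G_k)^{\theta_k}$.
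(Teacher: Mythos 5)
Your outline takes a genuinely different route from the paper, but the ``hard step'' you flag is not actually carried out, and as structured it would introduce a dependency loop.

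The paper's argument is purely algebraic and self-contained. It reduces to the fraction field $K$ of $A$ (no need for the algebraic closure), exploits the $X_\imath$-grading $\RO_A^\imath = \bigoplus_{\zeta\in X_\imath}\RO_{A,\zeta}^\imath$, and uses the injections $s_\mu^*:\RO_{A,\overline{\mu}}^\imath\hookrightarrow\RO_{A,\mu}\subset\RO_A$ (dual to the splittings $s_\mu$ of \S\ref{sec:fd}) together with the commutative diagram \eqref{dia:OmA} to show a homogeneous element of $\RO_A^\imath$ cannot be a zero divisor, since its image under $s_\mu^*$ lands in the integral domain $\RO_A$. The decisive ingredient for passing from homogeneous to arbitrary elements is Lemma~\ref{le:nop}, that $X_\imath$ has no odd torsion: in characteristic $p>2$ one raises a hypothetical nilpotent $f=\sum_\zeta f_\zeta$ to a $p^r$-th power, and $f^{p^r}=\sum_\zeta f_\zeta^{p^r}$ has its graded pieces in \emph{distinct} degrees $p^r\zeta$ precisely because $X_\imath$ has no $p$-torsion; hence each $f_\zeta^{p^r}=0$ and one concludes. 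This is short, direct, and uses nothing downstream.

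Your geometric route — identify $\RO_k^\imath$ with the coordinate ring of the scheme-theoretic fixed points $G_k^{\theta_k}$ and appeal to Iversen/Fogarty smoothness of fixed-point schemes when the group order is invertible — is coherent in principle, but two issues arise. First, the identification $I_k=J_k$ is the entire content; you correctly observe that $J_k\subseteq I_k$ amounts to $\theta_k$ fixing ${}_k\dot{\U}^\imath$ pointwise, and that the reverse inclusion would follow from equality of $k$-points plus radicality of $J_k$, but you do not carry out either half. Second, and more structurally, the $k$-point comparison $G_k^\imath=K_k$ is exactly what Theorem~\ref{thm:Oik} establishes, and that theorem in turn uses this proposition (reducedness is what lets one say $\RO_k^\imath$ \emph{is} the coordinate ring of $K_k$, rather than merely having the same $k$-points). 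So to run your argument one would have to carve the point-set equality out as an independent prior lemma, reorganizing the paper. Your approach would buy a conceptual explanation (reducedness $=$ smoothness of the fixed-point scheme), whereas the paper's buys a shorter, self-contained proof avoiding Iversen's theorem and any appeal to the not-yet-proved Theorem~\ref{thm:Oik}; but as submitted, yours has a gap at precisely the step you call hard.
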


\begin{proof}
For any $\mu,\mu'\in X$, we have the following commutative diagram by definitions:
\begin{equation}\label{diag:au}
    \begin{tikzcd}
    {_A\dot{\U}^\imath}\one_{\overline{\mu+\mu'}} \arrow[r,"\iota_{\mu+\mu'}"] \arrow[d,"\Delta_{\overline{\mu},\overline{\mu'}}"'] & {_A\dot{\U}\one_{\mu+\mu'}} \arrow[d,"\Delta_{\mu,\mu'}"']  \\ {_A\dot{\U}^\imath\one_{\overline{\mu}}}\otimes{}{_A\dot{\U}^\imath\one_{\overline{\mu'}}} \arrow[r,"\iota_\mu\otimes\iota_{\mu'}"] & {_A\dot{\U}\one_\mu}\otimes{} {_A\dot{\U}\one_{\mu'}}. 
    \end{tikzcd}
\end{equation}

Here $\Delta_{\mu,\mu'}$ (resp., $\Delta_{\overline{\mu},\overline{\mu'}}$) stands for the comultiplication restricting to the corresponding weight spaces (resp., $\imath$weight spaces). 

For any $\lambda\in X$, it follows from the definition that $_A\dot{\U}(w^\bullet)_>\one_\lambda$ is the kernel of the map $s_\lambda:{}_A\dot{\U}\one_\lambda\rightarrow{}_A\dot{\U}^\imath\one_{\overline{\lambda}}$. 

For $\mu,\mu'\in X$, it follows from \cite[2.8 (a)]{Lu09} that 
\[
\Delta_{\mu,\mu'}\left({}_A\dot{\U}(w^\bullet)_>\one_{\mu+\mu'}\right)\subset \left({}_A\dot{\U}(w^\bullet)_>\one_{\mu}\right)\otimes{}_A\dot{\U}\one_{\mu'}+{}_A\dot{\U}\one_\mu\otimes\left({}_A\dot{\U}(w^\bullet)_>\one_{\mu'}\right).
\]

Together with \eqref{diag:au} and the equality $s_\lambda\circ\iota_\lambda=id$ for $\lambda\in X$, we deduce that the diagram
\begin{equation}\label{dia:As}
    \begin{tikzcd}
    {_A\dot{\U}\one_{\mu+\mu'}} \arrow[r,"s_{\mu+\mu'}"] \arrow[d,"\Delta_{\mu,\mu'}"'] & {_A\dot{\U}^\imath\one_{\overline{\mu+\mu'}}} \arrow[d,"\Delta_{\overline{\mu},\overline{\mu'}}"]\\ {_A\dot{\U}\one_\mu}\otimes{}{_A\dot{\U}\one_{\mu'}} \arrow[r,"s_\mu\otimes s_{\mu'}"] & {_A\dot{\U}^\imath\one_{\overline{\mu}}}\otimes {_A\dot{\U}^\imath\one_{\overline{\mu'}}}
    \end{tikzcd}
\end{equation}
commutes.

By taking the linear dual of the diagram (\ref{dia:As}) and restricting to the proper subspaces, we obtain the commutative diagram:
\begin{equation}\label{dia:OmA}
    \begin{tikzcd}
 \RO_{A,\mu+\mu'} & \RO_{A,\overline{\mu+\mu'}}^\imath \arrow[l,"s_{\mu+\mu'}^*"'] \\ \RO_{A,\mu}\otimes \RO_{A,\mu'} \arrow[u] & \RO_{A,\overline{\mu}^\imath}\otimes\RO_{A,\overline{\mu'}}^\imath \arrow[u] \arrow[l,"s_\mu^*\otimes s_{\mu'}^*"'].
 \end{tikzcd}
\end{equation}

Since $\RO_A^\imath$ is a flat $A$-module, it can be viewed as a subring of $\RO_K^\imath$, where $K$ is the fractional field of $A$. Hence we may assume that $A$ is a field. Set $p=\text{char }A$. Then by our assumption, $p\neq 2$.

Firstly assume $p>0$. Note that for any $\zeta,\zeta'\in X_\imath$, we have $\RO_{A,\zeta}^\imath\cdot\RO_{A,\zeta'}^\imath\subset\RO_{A,\zeta+\zeta'}^\imath$. So $\RO_A^\imath=\bigoplus_{\zeta\in X_\imath}\RO_{A,\zeta}^\imath$ is a $X_\imath$-graded algebra. We firstly prove that a homogeneous element cannot be nilpotent.

Take $f\in \RO_{A,\zeta}^\imath$, $g\in \RO_{A,\zeta'}^\imath$, such that $f\cdot g=0$. Choose $\mu,\mu'\in X$, such that $\overline{\mu}=\zeta$, $\overline{\mu'}=\zeta'$. By the commuting diagram (\ref{dia:OmA}), we deduce that $s_\mu^*(f)\cdot s_{\mu'}^*(g)=0$. Since $\RO_A$ is a integral domain, and $s_\mu^*$, $s_{\mu'}^*$ are injective, we deduce that either $f$ or $g$ is 0. Now suppose $f^n=0$, for some homogeneous element $f$, using the argument inductively, we deduce that $f=0$.

Suppose $f=\sum_{\zeta\in X_\imath} f_\zeta\in \RO_A^\imath$ be a nilpotent element, where $f_\zeta\in\RO_{A,\zeta}^\imath$. Then there exists $r\in \BN$, such that $f^{p^r}=0$. Note that $f^{p^r}=\sum_{\zeta\in X_\imath} f_\zeta^{p^r}$. Element $f_\zeta^{p^r}$ has degree $p^r\zeta$. Since $X_\imath$ has no $p$-torsion by Lemma \ref{le:nop}, we deduce that $f_\zeta^{p^r}$ and $f_{\zeta'}^{p^r}$ have different degrees whenever $\zeta\neq \zeta'$. Hence $f_\zeta^{p^r}=0$, for any $\zeta\in X_\imath$, which implies $f_\zeta=0$, thanks to the above argument. Therefore $f=0$.

The case when $p=0$ follows from a well-known result by Cartier (which asserts that any finitely generated Hopf algebra over a field with characteristic 0 is reduced).
\end{proof}

\begin{remarks}
The assumption that $\textrm{char}(A)\neq 2$ can not be dropped in general. In the case of of rank 1, one can show that $\RO_A^\imath\cong A[u,v]/(u^2-v^2-1)$, which is non-reduced if $A$ has characteristic 2.
\end{remarks}

\subsection{Functorial construction of involutions}\label{sec:fci}

Our next goal is to describe the points of the group scheme $\mathbf{G}^\imath$. We firstly construct an involution on the Chevalley group scheme $\mathbf{G}$.

Let $A$ be a commutative ring, which is viewed as an $\A$-algebra as before. Recall the group $G_A$ in (\ref{eq:ga}). Let us define a group involution $\theta_A$ of $G_A$. The $A$-algebra $_A\dot{\U}$ is naturally a $\BZ[\I]$-graded algebra, where $\text{deg}(E_i\one_\lambda)=i$, $\text{deg}(F_i\one_\lambda)=-i$, and $\text{deg}(\one_\lambda)=0$, for any $i\in\I$ and $\lambda\in X$. 

For any group morphism $t:\BZ[\I]\rightarrow A^\times=A-\{0\}$, we define an $A$-algebra automorphism $\Xi (t):{_A\dot{\U}}\longrightarrow {_A\dot{\U}}$, given by $\Xi(t)(u)=t(\mu)u$, for any homogeneous element $u$ in ${_A\dot{\U}}$ with degree $\mu\in\BZ[\I]$. Since the canonical basis elements are homogeneous, automorphism $\Xi(t)$ extends to the completion algebra $_A\widehat{\U}$.

Recall the parameters $\varsigma_i$ ($i\in\I_\circ$) from Section \ref{sec:par}. Let $\overline{\varsigma_i}\in A$ be the image of $\varsigma_i$ in $A$. Then we have $\overline{\varsigma_i}\in\{\pm 1\}$. By the assumption in \cite[Definition 3.5]{BW18}, one has $\overline{\varsigma_i}=\overline{\varsigma_{\tau i}}$ if $\langle \alpha_i^\vee,\theta \alpha_i\rangle=0$, and $\overline{\varsigma_i}\cdot\overline{\varsigma_{\tau i}}=(-1)^{\langle 2\rho_\bullet^\vee,\alpha_i\rangle}$, for any $i\in \I_\circ$. Here $2\rho_\bullet^\vee\in Y$ denotes the sum of positive coroots in the sub-root system generated by $\alpha_i$ for $i\in\I_\bullet$.

Let $\epsilon=\epsilon((\overline{\varsigma_i})_{i\in\I_\circ}):\BZ[\I]\rightarrow A^\times$ be the group homomorphism such that $\epsilon(i)=\overline{\varsigma_i}$, for any $i\in\I_\circ$; and $\epsilon(i)=-1$, for any $i\in \I_\bullet$. 

Recall the involution $\omega$ on $\U$ from Section \ref{sec:defq}. It induces an involution on $\dot{\U}$ in a natural way. It was proved in \cite{Lu23} that $\omega$ preserves the canonical basis $\dot{\RB}$. Therefore it extends to an involution $\omega:{}_A\dot{\U}\rightarrow{}_A\dot{\U}$. 

Recall the graph involution $\tau: \I\rightarrow\I$. It extends to an involution $\tau:X\rightarrow X$ by $\tau(\lambda) =-w_\bullet\theta\lambda$. By the definition of the $\imath$root datum we have $\tau(\alpha_i)=\alpha_{\tau i}$ for $i\in\I$. We define the algebra involution $\Tilde{\tau}:{}_\A\dot{\U}\rightarrow{}_\A\dot{\U}$ by $\Tilde{\tau}(E_i^{(n)}\one_\lambda)=E_{\tau i}^{(n)}\one_{\tau \lambda}$, $\Tilde{\tau}(F_i^{(n)}\one_\lambda)=F_{\tau i}^{(n)}\one_{\tau \lambda}$, for $i\in\I$, $\lambda\in X$, and $n\in\mathbb{N}$. Since all the construction regarding the canonical basis is equivariant with respect to the graph involution $\tau$, it is clear that $\Tilde{\tau}$ preserves $\dot{\RB}$. Therefore it entends to an involution $\Tilde{\tau}:{}_A\widehat{\U}\rightarrow{}_A\widehat{\U}$.

For any $w\in W$, the braid group action $\T_w$ on $\U$ induces an action on $_\A\dot{\U}$ (\cite[\S 41.1.2]{Lu93}). We firstly show the following finiteness condition.

\begin{lemma}\label{le:fib}
    For $w\in W$ and $b\in\dot{\RB}$, let us write $\T_w(b)=\sum_{b'\in\dot{\RB}}c^w_{b,b'}b'$, where $c^w_{b,b'}$ belongs to $\A$. Then for any fixed $b'\in\dot{\RB}$, there are only finitely many $b\in\dot{\RB}$, such that $c^w_{b,b'}\neq 0$.
\end{lemma}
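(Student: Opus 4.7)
The plan is to combine Lusztig's module-level braid group operator $T_w$ with the finiteness principle \cite[\S 29.1.6]{Lu93} and the tensor product compatibility of canonical bases \cite[Theorem 25.2.1]{Lu93}. Although $\T_w$ need not preserve $\dot{\RB}$, I will detect which $b\in\dot{\RB}$ contribute a nonzero coefficient in front of a fixed $b'$ by applying $\T_w(b)$ to a vector of an integrable module on which $b'$ acts as a canonical basis element.

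First I would fix $b'\in\dot{\RB}$ and, using \cite[Theorem 25.2.1]{Lu93}, choose $\mu_1,\mu_2\in X^+$ such that $b'\cdot v_0\neq 0$ for $v_0:=v_{-\mu_1}^-\otimes v_{\mu_2}^+\in M:={}^\omega L(\mu_1)\otimes L(\mu_2)$; then $b'\cdot v_0\in\RB(\mu_1,\mu_2)$. Let $\lambda\in X$ denote the weight of $v_0$, so that $b'\in\dot{\U}\one_\lambda$. Suppose $b\in\dot{\RB}$ satisfies $c^w_{b,b'}\neq 0$. Since $\T_w$ carries $\dot{\U}\one_{w^{-1}\lambda}$ onto $\dot{\U}\one_\lambda$, the element $b$ must lie in $\dot{\U}\one_{w^{-1}\lambda}$, and every basis element $b''$ appearing in the finite expansion $\T_w(b)=\sum_{b''}c^w_{b,b''}b''$ lies in $\dot{\U}\one_\lambda$. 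By \cite[Theorem 25.2.1]{Lu93} again, the map $u\mapsto u\cdot v_0$ from $\dot{\U}\one_\lambda$ to $M$ sends distinct elements of $\dot{\RB}\cap\dot{\U}\one_\lambda$ to either zero or to distinct elements of $\RB(\mu_1,\mu_2)$. Expanding $\T_w(b)\cdot v_0=\sum_{b''}c^w_{b,b''}(b''\cdot v_0)$ in $\RB(\mu_1,\mu_2)\cup\{0\}$, the coefficient in front of the basis vector $b'\cdot v_0$ is exactly $c^w_{b,b'}\neq 0$, so $\T_w(b)\cdot v_0\neq 0$.

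Finally, I would invoke Lusztig's $\BQ(q)$-linear automorphism $T_w:M\to M$ on the integrable module $M$, satisfying $T_w(x\cdot m)=\T_w(x)\cdot T_w(m)$ for all $x\in\dot{\U}$ and $m\in M$ (see \cite[\S 5.2]{Lu93}). Rewriting gives $\T_w(b)\cdot v_0=T_w(b\cdot T_w^{-1}(v_0))$, and the invertibility of $T_w$ forces $b\cdot T_w^{-1}(v_0)\neq 0$. In particular $b$ acts as a nonzero linear operator on $M$, and \cite[\S 29.1.6]{Lu93} implies that only finitely many elements of $\dot{\RB}$ can do so. This bounds $\{b\in\dot{\RB}:c^w_{b,b'}\neq 0\}$ by a finite set, as desired. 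The only point requiring care will be the existence and intertwining property of the module-level operator $T_w$ on the tensor product $M$, but this is classical Lusztig machinery, entirely parallel to the braid identity on $\dot{\U}$ that already enters the statement.
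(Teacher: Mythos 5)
Your proposal is correct and follows essentially the same route as the paper: choose $\mu_1,\mu_2$ so that $b'\cdot v_0\neq 0$ in $M={}^\omega L(\mu_1)\otimes L(\mu_2)$, use the compatibility of $\dot{\RB}$ with $\RB(\mu_1,\mu_2)$ to see $\T_w(b)\cdot v_0\neq 0$, transport back via the module-level braid operator to get $b\cdot \T_w^{-1}(v_0)\neq 0$, and finish with \cite[\S 29.1.6]{Lu93}. You simply spell out more explicitly the weight bookkeeping and the no-cancellation step that the paper leaves implicit.
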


\begin{proof}
    For any $w\in W$ and $b'\in\dot{\RB}$, let us take $\lambda,\mu\in X^+$, such that $b'\cdot v_{-\lambda}^-\otimes v_\mu^+\neq 0$ in $^\omega L(\lambda)\otimes L(\mu)$. By abuse of language, let $\T_w:{}^\omega L(\lambda)\otimes L(\mu)\rightarrow{} ^\omega L(\lambda)\otimes L(\mu)$ be the braid group action on the module. Suppose $c^w_{b,b'}\neq 0$ for $b\in\dot{\RB}$. Then 
    \[
    b\cdot \T_w^{-1}(v_{-\lambda}^-\otimes v_\mu^+)=\T_{w}^{-1}(\T_w(b)\cdot v_{-\lambda}^-\otimes v_\mu^+)\neq 0.
    \]
     Hence there are only finitely many $b\in\dot{\RB}$ satisfying this condition.
\end{proof}

Thanks to this lemma, $\T_w$ extends to the completion $_\A\widehat{\U}$. Therefore it induces an action on $_A\widehat{\U}$, which we still denote by $\T_w$.

Define
\[
\theta_A:=\T_{w_\bullet}\circ\tilde{\tau}\circ\omega\circ\Xi(\epsilon):{_A\widehat{\U}}\longrightarrow{_A\widehat{\U}}
\]
to be an automorphism of $_A\widehat{\U}$.

\begin{thm}\label{prop:ainv}
We have $\theta_A^2=id$, and $\theta_A(G_A)=G_A$. Therefore we have a group involution $\theta_A:G_A\rightarrow G_A$. Moreover this involution is functorial on $A$, that is, for any ring homomorphism $A\rightarrow B$ with the induced map $G_A\rightarrow G_B$, the following diagram
\begin{equation}
    \begin{tikzcd}
        & G_A \arrow[r] \arrow[d,"\theta_A"'] & G_B \arrow[d,"\theta_B"] \\ & G_A \arrow[r] & G_B.
    \end{tikzcd}
\end{equation}
commutes.
\end{thm}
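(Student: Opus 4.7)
The plan is to verify the three assertions in order: involutivity $\theta_A^2 = \mathrm{id}$, stability $\theta_A(G_A) = G_A$, and functoriality. Each of the four factors $\T_{w_\bullet}$, $\tilde\tau$, $\omega$, $\Xi(\epsilon)$ is, by construction, the base change of an $\A$-linear endomorphism of $_\A\widehat{\U}$ (the braid group operator extends to the completion via Lemma \ref{le:fib}), so the composition $\theta_A$ is automatically functorial in $A$, settling the last assertion and the commutativity of the final diagram. Moreover each factor is an $A$-algebra automorphism of $_A\widehat{\U}$, so $\theta_A$ is one as well. This means both of the remaining assertions can be checked on a set of algebra generators of $_A\dot\U$, and then extended by continuity to the completion $_A\widehat{\U}$.

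For involutivity, I would first compute $\theta_A(\one_\lambda) = \one_{-w_\bullet\tau\lambda}$: the factor $\Xi(\epsilon)$ fixes $\one_\lambda$ (degree zero in $\BZ[\I]$), $\omega$ sends it to $\one_{-\lambda}$, $\tilde\tau$ to $\one_{-\tau\lambda}$, and $\T_{w_\bullet}$ to $\one_{-w_\bullet\tau\lambda}$. Since $\tau$ preserves $\I_\bullet$ and the longest element of a finite reflection group is fixed by every diagram automorphism, $\tau$ commutes with $w_\bullet$, giving $\theta_A^2(\one_\lambda) = \one_{w_\bullet\tau w_\bullet\tau\lambda} = \one_\lambda$. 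For $E_i\one_\lambda$ and $F_i\one_\lambda$ I would expand each of the four factors explicitly, using the formulas for $\T_{w_\bullet}(E_j)$ and $\T_{w_\bullet}(F_j)$, the identity $\theta(\alpha_i) = -w_\bullet\alpha_{\tau i}$ from Definition~\ref{def:iroot}, and the parameter conditions $\overline{\varsigma_i}\cdot\overline{\varsigma_{\tau i}} = (-1)^{\langle 2\rho_\bullet^\vee,\alpha_i\rangle}$ (and $\overline{\varsigma_i}=\overline{\varsigma_{\tau i}}$ when $\langle \alpha_i^\vee,\theta\alpha_i\rangle=0$) recorded in \S\ref{sec:par}; after tracking the weight shifts, index permutations, and signs, everything should square to the identity.

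For $\theta_A(G_A) = G_A$, I characterize $G_A$ by the group-like condition $\WD(\xi) = \xi\otimes\xi$ together with the unit condition $n_{\one_0} = 1$. The unit condition is preserved because $\theta_A(\one_0) = \one_0$. Since $\theta_A$ is an algebra automorphism, once I establish that $\WD\circ\theta_A$ equals either $(\theta_A\otimes\theta_A)\circ\WD$ or its composition with the tensor swap, the group-like condition is stable under $\theta_A$ and hence $\theta_A(G_A)\subseteq G_A$; applying the same to $\theta_A^{-1}=\theta_A$ gives the reverse inclusion, and $\theta_A$ restricts to a group involution because it is multiplicative on $_A\widehat{\U}$. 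Among the four factors, $\tilde\tau$ and $\Xi(\epsilon)$ are coalgebra automorphisms of $_A\widehat{\U}$, while $\omega$ is a coalgebra anti-automorphism, which introduces a single tensor swap. The main obstacle is the compatibility of the braid group action $\T_{w_\bullet}$ with $\WD$: on a generic element, $\T_{w_\bullet}$ twists the coproduct by a quasi-$\R$-matrix and is not a coalgebra map on the nose. I would handle this by verifying that the twist collapses on group-like elements — specifically, by tracking how the quasi-$\R$-matrix factor interacts with $\xi\otimes\xi$ in $_A\widehat{\U}^{(2)}$ for $\xi\in G_A$ — or, as a more robust alternative, by showing that the dualized operator $\theta_A^*$ on $\RO_A$ (defined via $f\mapsto f\circ\theta_A|_{_A\dot\U}$) is well-defined and a Hopf algebra automorphism of $\RO_A$, from which $\theta_A(G_A) = G_A$ follows by the functor-of-points identification $G_A = \text{Hom}_{A\text{-alg}}(\RO_A,A)$.
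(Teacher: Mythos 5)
Your roadmap is sound in outline — reduce both assertions to generators, compute $\theta_A(\one_\lambda)$, note that each factor is functorial — but the two central technical steps are left as ``should work'' and, without a key simplification you have not identified, they would not close cleanly.

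The missing idea is that, in the context of this theorem, $A$ is a commutative ring on which $q$ acts by $1$, and at $q=1$ the braid operator $\T_i$ is \emph{inner conjugation by a group-like element}: by \cite[\S2.3(c)]{Lu09}, $\T_i(u) = \overline{s_i}\,u\,\overline{s_i}^{-1}$ where $\overline{s_i} = x_i(1)y_i(-1)x_i(1)$, and $\WD(\overline{s_i}) = \overline{s_i}\otimes\overline{s_i}$. This single fact dissolves both of your worries. For stability, it means $\T_{w_\bullet}$ is honestly compatible with $\WD$ (no quasi-$\R$-matrix twist survives the specialization), so you need not track any correction factor against group-like elements nor detour through $\theta_A^*$ on $\RO_A$; your hedging about ``a single tensor swap'' from $\omega$ is also moot because the $q=1$ coproduct is cocommutative. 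For involutivity, the paper exploits the same fact differently: it factors $\theta = \theta'\circ\Xi(-\epsilon)$ with $\theta' = \T_{w_\bullet}\circ\tilde\tau\circ\omega'$ and $\omega' = \omega\circ\Xi(-1)$, observes that $\omega'(\overline{s_i}) = \overline{s_i}$ (an $\mathrm{SL}_2$ computation), hence $\omega'$ commutes with every $\T_i$, and concludes $\theta'^2 = \T_{w_\bullet}^2 = \Xi\bigl((-1)^{\langle 2\rho_\bullet^\vee,\cdot\rangle}\bigr)$ by \cite[Proposition 2.2(2)]{Ko14}. The parameter conditions then kill the sign. Your proposal of expanding $\T_{w_\bullet}(E_j)$ explicitly and tracking weight shifts would have to rediscover how $\omega$ intertwines with $\T_{w_\bullet}$ — a relation that is genuinely awkward unless one first replaces $\omega$ by $\omega'$ — so as written the involutivity argument is a hope rather than a computation.

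One further small remark: your justification that $w_\bullet\tau w_\bullet\tau = \mathrm{id}$ on $X$ appeals to $\tau$ fixing the longest element of $W_{\I_\bullet}$. This is true, but it only controls $\tau\circ w_\bullet$ on the span of $\I_\bullet$; the paper instead uses that $-w_\bullet\tau = \theta$ on $X$ by the definition $\tau(\lambda) = -w_\bullet\theta\lambda$, and $\theta^2 = \mathrm{id}$ is part of the $\imath$root datum. This is cleaner and avoids any case analysis of how $\tau$ interacts with $w_\bullet$ outside the $\I_\bullet$-span.
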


\begin{proof}
When there is no confusion, let us write $\theta=\theta_A$ for simplicity. To show that $\theta^2=id$, it suffices to show that $\theta^2$ fixes canonical basis elements of $_A\dot{\U}$ pointwise. Since $_A\dot{\U}=A\otimes \big(_\BZ\dot{\U}\big)$, we only need to  check this over $\BZ$. Moreover, since $_\BZ\dot{\U}$ can be viewed as a subspace of $_\BC\dot{\U}$, it will suffice to check over $\BC$. Write $\dot{U}$ to denote $_\BC\dot{\U}$ for brevity.

The $\BC$-algebra $\dot{U}$ is generated by elements $\one_\lambda$ ($\lambda\in X$), $E_i\one_\lambda$ ($i\in\I$, $\lambda\in X$), $F_i\one_\lambda$ ($i\in\I$, $\lambda\in X$). It suffices to check these generators are fixed by $\theta^2$.

It is easy to see that $\theta(\one_\lambda)=\one_{-w_\bullet\tau\lambda}$ for any $\lambda\in X$. Since $-w_\bullet\tau$ is an involution on $X$, we deduce that $\theta^2(\one_\lambda)=\one_\lambda$.

Write $\omega'=\omega\circ\Xi(-1)$, and $\theta'=\T_{w_\bullet}\circ\tilde{\tau}\circ\omega'$. Then $\theta=\theta'\circ\Xi(-\epsilon)$. Here $-1:\BZ[\I]\rightarrow A^\times$ denotes the group homomorphism sending all the $i\in\I$ to $-1$. By \cite[\S 2.3(c)]{Lu09}, for any $i\in\I$, we have $\T_i(u)=\overline{s_i}u\overline{s_i}^{-1}$, for any $u\in {_A\widehat{\U}}$, where $\overline{s_i}=x_i(1)y_i(-1)x_i(1)$. Note that $\omega'(x_i(1))=y_i(-1)$, and $\omega'(y_i(-1))=x_i(1)$. Then by doing computation in $\textrm{SL}_2$, we have $\omega'(\overline{s_i})=y_i(-1)x_i(1)y_i(-1)=\overline{s_i}$. Hence for any $u\in{_A\widehat{\U}}$, we have $\omega'\circ\T_i(u)=\omega'(\overline{s_i}u\overline{s_i}^{-1})=\overline{s_i}\omega'(u)\overline{s_i}^{-1}=\T_i\circ\omega'(u)$. Therefore $\T_{w_\bullet}$ commutes with $\omega'$. Since the graph automorphism $\tilde{\tau}$ commutes with $\omega'$ and $\T_{w_\bullet}$, we have $\theta'^2=\T_{w_\bullet}^2=\Xi((-1)^{\langle 2\rho_\bullet^\vee,\cdot\rangle})$, where $(-1)^{\langle 2\rho_\bullet^\vee,\cdot\rangle}(i)=(-1)^{\langle 2\rho_\bullet^\vee,\alpha_i\rangle}$, for $i\in\I$. The last equality follows from \cite[Proposition 2.2(2)]{Ko14}.

For any $i\in\I$ and $\lambda\in X$, we have $$\theta^2(E_i\one_\lambda)=\epsilon(i+\tau i)\theta'^2(E_i\one_\lambda)=\overline{\varsigma_i}\cdot\overline{\varsigma_{\tau i}}(-1)^{\langle 2\rho_\bullet^\vee,\alpha_i\rangle}E_i\one_\lambda=E_i\one_\lambda.$$ Similarly we have $\theta^2(F_i\one_\lambda)=F_i\one_\lambda$. Hence we proved that $\theta^2=id$.

To check $\theta$ restricts to $G_A$, it suffices to show that $\theta$ is compatible with the coproduct, that is $\theta\otimes\theta\circ \widehat{\Delta}=\widehat{\Delta}\circ\theta$, and $\theta(\one_0)=\one_0$. The second condition is trivial. For the first one, it is easy to verify that maps $\tilde{\tau}$, $\omega$ and $\Xi(\epsilon)$ are compatible with coproduct. For the automorphism $\T_{w_\bullet}$, it will suffice to check that $\WD(\T_i(u))=\T_i\otimes\T_i\circ\WD(u)$, for any $u\in {_A\widehat{\U}}$, which follows from $\T_i(u)=\overline{s_i}u\overline{s_i}^{-1}$, and $\WD(\overline{s_i})=\overline{s_i}\otimes \overline{s_i}$ (\cite[Lemma 2.2(e), 2.3 (c)]{Lu09}). Therefore we showed that $\theta(G_A)=G_A$.

Finally, the functoriality is clear by definition. We complete the proof.
\end{proof}

\begin{corollary}
We have a canonical bijection:
\[
\{\text{iso. classes of symmetric pairs $(G_k,\theta_k)$}\}\leftrightarrow\{\text{iso. classes of $\imath$root data}\}.
\]    
\end{corollary}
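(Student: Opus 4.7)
The plan is to construct mutually inverse maps between the two sets of isomorphism classes. In one direction, to an $\imath$root datum $\Psi$ one associates the symmetric pair $(G_k,\theta_k)$ with $G_k=\G(k)$ for $\G$ the Chevalley group scheme of the underlying root datum and $\theta_k$ the involution furnished by Theorem~\ref{prop:ainv}. The functoriality clause in Theorem~\ref{prop:ainv}, together with standard properties of Chevalley group schemes, makes this assignment well-defined on isomorphism classes. In the other direction, to $(G_k,\theta_k)$ one assigns the $\imath$root datum built in Section~\ref{sec:lkb} from a split pair $(B_k,T_k)$; the existence of split pairs and their conjugacy under $G_k^{\theta_k}$, both due to Springer, ensure independence of the choice.

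For the round trip starting from $\Psi$, the underlying root datum is recovered via the identifications of Section~\ref{sec:regpq}, and a direct evaluation of the formula $\theta_k=\T_{w_\bullet}\circ\widetilde{\tau}\circ\omega\circ\Xi(\epsilon)$ on the torus $T_k$ gives
\[
    \theta_k(\one_\lambda)=\one_{-w_\bullet\tau\lambda}=\one_{\theta\lambda},
\]
using $\tau\lambda=-w_\bullet\theta\lambda$, so the involution on $X$ induced by $\theta_k|_{T_k}$ coincides with the given $\theta$. The verification that the standard $(B_k,T_k)$ from Section~\ref{sec:regpq} is a split pair for $\theta_k$ is a straightforward root subgroup calculation: $\omega$ inverts positivity while $\widetilde{\tau}$, $\Xi(\epsilon)$, and $\T_{w_\bullet}$ modify it only through the action of $w_\bullet$, so that $\theta_k(B_k)\cap B_k$ is the Borel subgroup of the Levi $L_{\I_\bullet}$ inside $B_k$, realizing the minimal dimension.

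The main obstacle is the reverse round trip: showing that a symmetric pair is determined, up to isomorphism, by the attached $\imath$root datum. For simple $G_k$ this is Springer's theorem \cite{Spr87}. To handle general reductive $G_k$, one isolates the action of $\theta_k$ on the connected center, which is fully captured by the involution $\theta$ on the full character lattice $X$, and reduces the remaining content to the derived subgroup, where Springer's classification applies. The compatibility condition $\theta(\alpha_i)=-w_\bullet\alpha_{\tau i}$ in Definition~\ref{def:iroot} is precisely what matches the Satake classification on the derived subgroup with the involution on $X$; gluing both pieces via the isogeny between the derived-times-center cover and $G_k$ produces the required isomorphism of symmetric pairs and finishes the argument.
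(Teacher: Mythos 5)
Your two maps are the paper's own (Section \ref{sec:lkb} for one direction, Theorem \ref{prop:ainv} for the other), so at the structural level this is the same approach; the paper simply asserts "it is clear that these two maps are inverse," whereas you try to verify both round trips.

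Your verification of the first round trip is correct and useful detail: the identity $\theta_k(\one_\lambda)=\one_{-w_\bullet\tau\lambda}=\one_{\theta\lambda}$ (using $\tau\lambda=-w_\bullet\theta\lambda$ and $w_\bullet^2=\mathrm{id}$) shows the induced involution on $X$ is $\theta$, and your root subgroup computation showing $\theta_k(B_k)\cap B_k$ is the Borel of the Levi $L_{\I_\bullet}$ correctly identifies $(B_k,T_k)$ as a split pair. (Incidentally your computation confirms that the formula $\theta_X(\alpha_i)=w_\bullet\alpha_{\tau i}$ in Section \ref{sec:lkb} should read $-w_\bullet\alpha_{\tau i}$ to match Definition \ref{def:iroot}.)

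The second round trip is where you and the paper part ways, and where the real content lies. The paper's rhetoric (Section \ref{sec:class}) is that the functorial construction of Theorem \ref{prop:ainv} \emph{generalizes} Springer's classification and avoids his case analysis of order-two elements; by contrast you propose to \emph{reduce} the general reductive case back to Springer's result for simple groups. The reduction is plausible in outline but is currently sketched too loosely to close the argument: (i) the derived subgroup is semisimple, not simple, and the Satake data can permute simple factors, so a further decomposition with $\tau$-orbits of factors is needed before Springer applies; (ii) passing between $G_k$ and $Z(G_k)^\circ\times G_{k,\mathrm{der}}$ is an isogeny, and you do not verify that the two involutions (or the isomorphism intertwining them) descend/lift compatibly across it --- this is exactly the kind of cohomological obstruction that makes the uniqueness statement nontrivial in the first place. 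Neither you nor the paper actually proves this uniqueness; you at least name Springer's theorem as the needed input, but you should either carry out the isogeny bookkeeping or, closer to the paper's intent, give a direct argument that two involutions of $G_k$ with a common split pair and the same induced involution on $X$ are $T_k$-conjugate.
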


\begin{proof}
    Given a symmetric pair $(G_k,\theta_k)$, it corresponds to an $\imath$root datum as explained in Section \ref{sec:lkb}. On the other hand, given an $\imath$root datum, we have constructed a symmetric pair $(G_k,\theta_k)$ in Theorem \ref{prop:ainv}. It is clear that these two maps are inverse to each other (up to isomorphism). 
\end{proof}

\subsection{Points over integral domains}

We are now ready to describe points of the group scheme $\mathbf{G}^\imath$ over integral domains.

Let $A$ be an integral domain where $1+1\neq 0$ in $A$. Set $K_A=G_A^{\theta_A}$ be the subgroup of $G_A$ consisting of $\theta_A$ fixed points. It is called the \emph{symmetric subgroup} of $G_A$ corresponding to the involution $\theta_A$.  

Let $k$ be the algebraic closure of the quotient field of $A$. Then the characteristic of $k$ in not 2. By Section \ref{sec:regpq} $G_k$ is a connected reductive group over $k$. We take the maximal torus $T_k$ and the Borel subgroup $B_k$ as in Section \ref{sec:regpq}. Recall the involution $\theta_k:G_k\rightarrow G_k$ in the previous section. It follows from the definition that $\theta_k$ preserves $T_k$. We denote by $K_k^\circ$ the identity component of $K_k$. Then by \cite[Proposition~7]{Vu74} (cf. \cite[\S2.9]{Ri82}), we have \begin{equation}\label{eq:TK}
 K_k = T_k^{\theta_k} K^\circ_k.
\end{equation}

Recall the algebra $\RO_k^\imath$ and the subgroup $G_k^\imath\subset G_k$ from Section \ref{sec:def}.

\begin{theorem}\label{thm:Oik}
As closed subgroups of $G_k$, we have $G_k^\imath=K_k$. In particular, the coordinate ring of the symmetric subgroup $K_k$ is isomorphic to $\RO_k^\imath$.
\end{theorem}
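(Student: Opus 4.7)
The plan is to prove the two inclusions $G_k^\imath \subseteq K_k$ and $K_k \subseteq G_k^\imath$ separately, using the description $G_k^\imath = G_k \cap {}_k\widehat{\U}^\imath$ from Section~\ref{sec:def} together with the decomposition $K_k = T_k^{\theta_k} K_k^\circ$ from \eqref{eq:TK}. For the first inclusion, the goal is to show that the involution $\theta_k$ constructed in Section~\ref{sec:fci} acts as the identity on ${}_k\widehat{\U}^\imath$. Since $\theta_k$ is a continuous algebra automorphism of ${}_k\widehat{\U}$ and the $\imath$canonical basis is a topological basis of ${}_k\widehat{\U}^\imath$, it suffices to verify $\theta_k|_{{}_k\dot{\U}^\imath}=\mathrm{id}$ on a set of algebra generators. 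Natural generators are the idempotents $\one_{\overline{\lambda}}$ ($\overline{\lambda}\in X_\imath$), the classical specializations $B_i|_{q=1} = f_i + \overline{\varsigma_i}\,T_{w_\bullet}(e_{\tau i})$ for $i\in\I_\circ$, and $e_j, f_j$ for $j\in\I_\bullet$. The idempotent case follows from $\theta_k(\one_\mu) = \one_{\theta\mu}$ together with the fact that $\theta$ stabilizes each fiber $\{\mu : \overline{\mu}=\overline{\lambda}\}$. The $\I_\bullet$ case is a parabolic computation using $\epsilon(j)=-1$. The central calculation is for $B_i$: expanding $\theta_k(B_i)$ via $\theta_k = T_{w_\bullet}\circ \tilde{\tau}\circ\omega\circ\Xi(\epsilon)$, the parameter identity $\overline{\varsigma_i}\cdot\overline{\varsigma_{\tau i}} = (-1)^{\langle 2\rho_\bullet^\vee,\alpha_i\rangle}$ from \cite[Definition~3.5]{BW18} supplies the exact sign that cancels the one arising when $\Xi(\epsilon)$ is commuted past $T_{w_\bullet}$ applied to the root vector of weight $w_\bullet\alpha_{\tau i}$.

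For the reverse inclusion $K_k \subseteq G_k^\imath$ I handle the two factors in \eqref{eq:TK} separately. For the torus: if $t = \sum_\lambda n_\lambda\one_\lambda \in T_k$ is $\theta_k$-fixed, then $n_\lambda = n_{\theta\lambda}$, so the group-like character $n: X\to k^\times$ factors through $X_\imath$, and $t = \sum_{\zeta\in X_\imath}n_\zeta\one_\zeta$ lies in ${}_k\widehat{\U}^\imath$. For the identity component I pass to Lie algebras. By Proposition~\ref{prop:GAi}, $\RO_k^\imath$ is reduced, so $G_k^\imath$ is a smooth closed subgroup of $G_k$. The inclusion already proved gives $\lie(G_k^\imath)\subseteq\lie(K_k)=\mathfrak{g}_k^{\theta_k}$. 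For the reverse containment, I identify $\lie(G_k^\imath)$ with the primitive elements of ${}_k\widehat{\U}^\imath$ (primitivity with respect to the right-coideal comultiplication on ${}_k\widehat{\U}^{\imath,1}$ coincides with primitivity in ${}_k\widehat{\U}$, since $\U^\imath$ is a right coideal of $\U$). The elements $B_i|_{q=1}$, $e_j, f_j$ for $j\in\I_\bullet$, and $h_\mu = \sum_\lambda\langle\mu,\lambda\rangle\one_\lambda$ for $\mu\in Y^\imath$ (which belongs to ${}_k\widehat{\U}^\imath$ precisely because $\mu=\theta\mu$ forces $\lambda\mapsto\langle\mu,\lambda\rangle$ to descend to $X_\imath$) are all primitive and all lie in ${}_k\dot{\U}^\imath$, and together they generate $\mathfrak{g}_k^{\theta_k}$ as a Lie algebra. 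Hence $\lie(G_k^\imath)=\mathfrak{g}_k^{\theta_k}$, and smoothness forces $(G_k^\imath)^\circ = K_k^\circ$. Combined with $T_k^{\theta_k}\subseteq G_k^\imath$, this gives $K_k=T_k^{\theta_k}K_k^\circ\subseteq G_k^\imath$.

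The main obstacle is the $B_i$ computation in the first inclusion: it requires carefully tracking how the automorphism $T_{w_\bullet}$ interacts with the $\BZ[\I]$-grading underlying $\Xi(\epsilon)$, where the sign $(-1)^{\langle 2\rho_\bullet^\vee,\alpha_i\rangle}$ emerges from the $\I_\bullet$-part of the weight $w_\bullet\alpha_{\tau i}-\alpha_{\tau i}$. The parameter constraint built into the Bao--Wang definition of the $\imath$quantum group is precisely calibrated so that the classical specializations of the $B_i$'s become generators of $\mathfrak{g}_k^{\theta_k}$ compatible with the group-level involution $\theta_k$ defined in Section~\ref{sec:fci}.
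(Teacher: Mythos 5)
Your proposal is correct and follows essentially the same route as the paper's proof: show $\theta_k$ fixes ${}_k\widehat{\U}^\imath$ pointwise for the inclusion $G_k^\imath\subseteq K_k$, then use the decomposition $K_k=T_k^{\theta_k}K_k^\circ$ from \eqref{eq:TK}, handling the torus factor directly and the identity component via a Lie algebra comparison (the paper cites \cite[Lemma~2.8]{Ko14} for the generators of $\mathfrak{g}_k^{\theta_k}$). You are somewhat more explicit than the paper in invoking Proposition~\ref{prop:GAi} to get smoothness of $G_k^\imath$ (which is indeed the tacit ingredient needed to pass from $\lie(G_k^\imath)=\mathfrak{k}$ to $(G_k^\imath)^\circ=K_k^\circ$ in positive characteristic), and in sketching the $B_i$ sign computation that the paper omits; one small slip is that the generating elements such as $h_\mu$ lie in the completion ${}_k\widehat{\U}^\imath$ rather than in ${}_k\dot{\U}^\imath$.
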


\begin{proof}
We firstly show that $G_k^\imath\subset K_k$. It suffices to show the subalgebra $_k\dot{\U}^\imath \subset {}_k\widehat{\U}$ is fixed by $\theta_k$ pointwise. Since $_k\dot{\U}^\imath\cong k\otimes_\BZ {}_\BZ\dot{\U}^\imath$, and $_\BZ\dot{\U}^\imath$ is a subalgebra of $_\BC\dot{\U}^\imath$, we may assume that $k=\BC$. Since $_\BC\dot{\U}^\imath$ is generated by $\one_\zeta$ ($\zeta\in X_\imath$), $B_{i}\one_\zeta$ ($i\in\I$, $\zeta\in X_\imath$), $E_i\one_\zeta$ ($i\in\I_\bullet,\zeta\in X_\imath$) , we only need to check $\theta_\BC$ fixes these generators. This is direct, and we omit the computation.

Next we compare the Lie algebras of $G_k^\imath$ and $K_k$. Let $\mathfrak{g}$ be the Lie algebra of $G$. Recall from Section \ref{sec:regpq} that we have a canonical embedding $\mathfrak{g}\hookrightarrow{}_k\widehat{\U}$. Also recall that $\mathfrak{g}$ has generators (as a Lie algebra)
\[
e_i=\sum_{\lambda\in X}E_i\one_\lambda,\qquad f_i=\sum_{\lambda\in X}F_i\one_\lambda,\qquad h_\mu=\sum_{\lambda\in X}\langle \mu,\lambda\rangle\one_\lambda,
\]
for any $i\in\I$, and $\mu\in Y$. Here we are abusing notations by using the same notations to denote the elements after base change.

The involution $\theta_k$ on $_k\widehat{\U}$ also restricts to a (Lie algebra) involution on $\mathfrak{g}$, which is the same as the differentiation of the involution on the group $G_k$. We still write $\theta_k$ to denote the involution on $\mathfrak{g}$.


Let $\mathfrak{k}$ be the Lie algebra of $K_k$. By \cite[\S9.4]{Bo91}, we have $\mathfrak{k}=\mathfrak{g}^{\theta_k}$. By \cite[Lemma 2.8]{Ko14} the subalgebra $\mathfrak{k}$ is generated by $f_i+\theta_k(f_{ i})$ ($i\in\I_\circ$), $e_j$, $f_j$ ($j\in \I_\bullet$) and $h_\mu$ ($\mu\in Y^\imath$).

We write $\mathfrak{g}^\imath\subset \mathfrak{g}$ to be the Lie algebra of $G^\imath_k$. Then it follows from the definition that $\mathfrak{g}^\imath=\mathfrak{g}\cap{_k\widehat{\U}^\imath}$. For any $i\in\I_\circ$, $j\in\I_\bullet$, $\mu\in Y^\imath$, we have
\[
f_i+\theta_k(f_{ i})=\sum_{\zeta\in X_\imath}B_{i}\one_\zeta,\quad f_j=\sum_{\zeta\in X_\imath}F_i\one_\zeta, \quad e_j=\sum_{\zeta\in X_\imath}E_i\one_\zeta,\quad  h_\mu=\sum_{\zeta\in X_\imath}\langle \mu,\zeta\rangle \one_\zeta.
\]
In particular we deduce that $\mathfrak{k}\subset\mathfrak{g}^\imath$. Combining with $G_k^\imath\subset K_k$, we deduce that $K_k^\circ$ is contained in $G_k^\imath$. 

Thanks to (\ref{eq:TK}), it remains to show that $T_k^{\theta_k}$ is contained in $G_k^\imath$.
Take any $\xi=\sum_{\lambda\in X}n_\lambda\one_\lambda$ in $T_k^{\theta_k}$. Then $n_\lambda\in k^\times$  and $n_{\lambda'}n_{\lambda''}=n_{\lambda'+\lambda''}$, for any $\lambda',\lambda''\in X$. Moreover we have $n_\mu=n_{\theta\mu}$, for any $\mu\in X$. Therefore $n_{\mu-\theta\mu}=1$. Hence $n_{\lambda'}=n_{\lambda''}$, whenever $\overline{\lambda'}=\overline{\lambda''}$ in $X_\imath$. For any $\zeta\in X_\imath$, we define $n_\zeta=n_\lambda$, for any $\lambda\in X$ with $\overline{\lambda}=\zeta$. Then we can write $\xi=\sum_{\zeta\in X_\imath}n_\zeta\one_\zeta \in {}_k\widehat{\U}^\imath$. 

We finish the proof now.
\end{proof}

\begin{corollary}
As subgroups of $G_A$, we have $G_A^\imath=K_A$.
\end{corollary}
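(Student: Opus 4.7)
The plan is to deduce the corollary from Theorem \ref{thm:Oik} by descent from the algebraic closure $k$ of the fraction field of $A$, using the functoriality of both sides. Note $k$ is an algebraically closed field of characteristic not $2$ since $A$ is.

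First I would observe that the containment $G_A^\imath \subset K_A$ holds with no integrality hypothesis on $A$. The first paragraph of the proof of Theorem \ref{thm:Oik} exhibits algebra generators of ${}_\BC\dot{\U}^\imath$ (namely $\one_\zeta$, $B_i\one_\zeta$ for $i\in\I_\circ$, and $E_j\one_\zeta, F_j\one_\zeta$ for $j\in\I_\bullet$) and verifies that each is fixed by $\theta_\BC$. These generators lie in the integral form ${}_\BZ\dot{\U}^\imath$, and $\theta$ is defined functorially by Theorem \ref{prop:ainv}; base changing gives that ${}_A\dot{\U}^\imath\subset {}_A\widehat{\U}$ is fixed pointwise by $\theta_A$, so every $\xi\in G_A^\imath\subset {}_A\widehat{\U}^\imath$ belongs to $K_A$.

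For the reverse inclusion $K_A\subset G_A^\imath$, embed $A\hookrightarrow k$. Coefficient-wise base change along $A\hookrightarrow k$ yields an injection ${}_A\widehat{\U}\hookrightarrow {}_k\widehat{\U}$ that restricts to an injection $G_A\hookrightarrow G_k$. Functoriality of $\theta$ identifies $\theta_A$ with the restriction of $\theta_k$, hence $K_A=G_A\cap K_k$ inside $G_k$. On the other hand, $\G^\imath\subset\G$ is a closed subgroup scheme cut out by the ideal $I_\BZ=\ker(r\colon\RO_\BZ\twoheadrightarrow\RO^\imath_\BZ)$, and since $\RO^\imath_\BZ$ is $\BZ$-free the sequence $0\to I_\BZ\to\RO_\BZ\to\RO^\imath_\BZ\to 0$ remains exact after $-\otimes_\BZ A$ and $-\otimes_\BZ k$. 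Thus for $g\in G_A$ the conditions $g\in G_A^\imath$ and $g\in G_k^\imath$ both amount to the identity $g(I_A)=0$ (in $A\subset k$), giving $G_A^\imath=G_A\cap G_k^\imath$. Now if $g\in K_A$, then $g\in K_k$, and Theorem \ref{thm:Oik} applied over $k$ gives $g\in G_k^\imath$; intersecting with $G_A$ yields $g\in G_A^\imath$.

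I do not anticipate a serious obstacle: the argument is essentially a formal descent from the algebraically closed case. The only step needing a small verification is the scheme-theoretic identity $G_A^\imath=G_A\cap G_k^\imath$, which is immediate from the $\BZ$-freeness of $\RO^\imath_\BZ$.
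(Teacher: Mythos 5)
Your proof is correct and takes essentially the same route as the paper: establish $G_A^\imath\subset K_A$ by the same verification as in Theorem~\ref{thm:Oik}, then reduce the reverse inclusion to the algebraically closed case $k$ via Theorem~\ref{thm:Oik}. The only cosmetic difference is in the framing of the descent step: you phrase the identity $G_A^\imath = G_A\cap G_k^\imath$ in terms of the $\BZ$-freeness of $\RO^\imath_\BZ$ and the base-changed defining ideal, whereas the paper works dually inside the completion $_A\widehat{\U}$ and writes the chain $K_A\subset {_A\widehat{\U}}\cap K_k = {_A\widehat{\U}}\cap G_k^\imath = {_A\widehat{\U}^\imath}\cap G_A = G_A^\imath$; these are two ways of expressing the same compatibility of the $\imath$canonical-basis integral form with base change.
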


\begin{proof}
The proof for $G_A^\imath\subset K_A$ is the same as the first part of the proof of Theorem \ref{thm:Oik}. For the other side inclusion, we embed all the objects into the ambient space $_k\widehat{\U}$. It follows from definitions that 
\[
K_A\subset {_A\widehat{\U}}\cap K_k={_A\widehat{\U}}\cap G_k^\imath={_A\widehat{\U}}\cap{_k\widehat{\U}^\imath}\cap G_k={_A\widehat{\U}^\imath}\cap G_A=G_A^\imath.
\]
We completed the proof.
\end{proof}

\subsection{Quantization}\label{sec:qt}

In this section we apply the construction of $\RO_A$ and $\RO_A^\imath$ to $A=\A$. By \cite[\S 3.1]{Lu09}, the $\A$-algebra $\RO_\A$ naturally carries a structure of a Hopf algebra, which is a quantization of the \emph{standard Poisson-Lie group structure} on $G_\mathbb{C}$. It is shown in \cite[Proposition 3.1]{Song24} that $K_{\mathbb{C}}\subset G_{\mathbb{C}}$ is a \emph{coisotropic subgroup}. We refer the readers to \cite[Chapter~1]{CP95} for the precise definitions regarding Poisson-Lie groups.

We will focus on the quantum counterparts of these notions, and show that $\RO_\A^\imath$ is a quantization of $K_\mathbb{C}$ in the sense of Ciccoli \cite{Cic97}.

\begin{definition}(\cite[Definition 4.3]{Cic97})
    A coisotropic quantum right subgroup $C$ of $\RO_\A$ is an $\A$-coalgebra, such that

    (1) $C$ is a $\RO_\A$-right module;

    (2) there exists a surjective linear map $r:\RO_\A\rightarrow C$, which is a morphism of $\RO_\A$-modules and of coalgebras.
\end{definition}

\begin{thm}
    $\RO_\A^\imath$ is a coisotropic quantum right subgroup of $\RO_\A$.
\end{thm}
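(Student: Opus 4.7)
The plan is to verify the two defining conditions for the surjective $\A$-linear map $r:\RO_\A\twoheadrightarrow\RO_\A^\imath$ from Section \ref{sec:def}, whose kernel is $I_\A=\{f\in\RO_\A : \hat{f}|_{{}_\A\dot{\U}^\imath}=0\}$.

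First I would show that $I_\A$ is a right ideal of $\RO_\A$, so that $\RO_\A^\imath$ inherits a right $\RO_\A$-module structure along $r$. This is precisely the computation appearing in the proof of Theorem \ref{thm:Hopfi}: for $f\in I_\A$, $g\in\RO_\A$, and $x\in{}_\A\dot{\U}^\imath$, one has $\widehat{fg}(x)=(f\otimes g)(\WD(x))=0$, using that $\U^\imath$ is a right coideal subalgebra of $\U$ (so $\WD(x)\in{}_\A\widehat{\U}^{\imath,1}$) and that $f$ annihilates the first tensor factor. That argument uses neither commutativity nor the specialization $q=1$, so it goes through verbatim over $\A$.

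Second I would verify that the coalgebra structure $(\delta,\epsilon)$ on $\RO_\A$ descends along $r$; equivalently, that $I_\A$ is a coideal. For the coproduct, the defining diagram gives $\delta(f)(u\otimes v)=\hat{f}(uv)$ for $u,v\in{}_\A\widehat{\U}$. Restricting $u,v$ to ${}_\A\dot{\U}^\imath$ and using that $\U^\imath$ is a subalgebra of $\U$ (the fact that underlies the existence of the product on ${}_\A\widehat{\U}^\imath$ in Corollary \ref{cor:fimu}), we get $uv\in{}_\A\dot{\U}^\imath$, hence $\hat{f}(uv)=0$ for $f\in I_\A$. Since $r\otimes r:\RO_\A\otimes\RO_\A\to\RO_\A^\imath\otimes\RO_\A^\imath$ is the restriction map to the pairing with ${}_\A\dot{\U}^\imath\otimes{}_\A\dot{\U}^\imath$ (and the target injects into $({}_\A\dot{\U}^\imath\otimes{}_\A\dot{\U}^\imath)^*$ since $\dot{\RB}^\imath$ is a free basis), this yields $(r\otimes r)(\delta(I_\A))=0$. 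For the counit, $\epsilon(f)=\hat{f}(\sum_\lambda\one_\lambda)$; grouping by $\imath$-weight, $\sum_{\overline{\lambda}=\zeta}\one_\lambda$ is precisely the image of $\one_\zeta$ under the algebra embedding ${}_\A\dot{\U}^\imath\hookrightarrow{}_\A\widehat{\U}$ of Section \ref{sec:par}(a), on which $\hat{f}$ vanishes for $f\in I_\A$. Hence $\epsilon(f)=0$.

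Combining the two steps, $\RO_\A^\imath$ is an $\A$-coalgebra and a right $\RO_\A$-module, with $r$ simultaneously a morphism of both, as required. I do not anticipate a genuine obstacle: the finiteness inputs of Lemma \ref{le:finit} and Corollary \ref{cor:fimu}, which already underpin the existence of the product on ${}_\A\widehat{\U}^\imath$ and of the map $r$, take care of all the formal issues. The conceptually interesting point, by contrast with Theorem \ref{thm:Hopfi}, is that the analogous \emph{two-sided} ideal argument would require $\U^\imath$ to be a \emph{left} coideal subalgebra of $\U$, which it is not; this is the quantum shadow of the classical fact that $K_\BC\subset G_\BC$ is merely coisotropic and not Poisson, and it is precisely why one obtains a coisotropic quantum right subgroup rather than a genuine Hopf quotient at this level of generality.
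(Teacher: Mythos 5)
Your proof is correct and amounts to essentially the same argument as the paper's: the paper constructs the coalgebra structure on $\RO_\A^\imath$ directly as the dual of the algebra structure on ${}_\A\dot{\U}^\imath$, constructs the right $\RO_\A$-module structure from the right coideal property $\WD({}_\A\dot{\U}^\imath)\subset{}_\A\widehat{\U}^{\imath,1}$, and notes $r$ is compatible, while you verify the equivalent dual conditions that $I_\A=\ker r$ is a right ideal and a coideal so that both structures descend along $r$ --- the same two facts, packaged through the kernel rather than the quotient. Your closing observation that the absence of a \emph{left} coideal structure is what blocks a two-sided ideal argument (and hence a Hopf quotient) is precisely the point made in the paper's remark immediately following the theorem.
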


\begin{proof}
    Since $_\A\dot{\U}^\imath$ is an associative $\A$-algebra, the $\A$-linear space $\RO_\A^\imath$ is naturally a coalgebra by the definition. The coideal structure $\Delta:\U^\imath\rightarrow \U^\imath\otimes\U$ together with finiteness conditions induce a right $\RO_\A$-action on $\RO_\A^\imath$. The surjective map $r:\RO_\A\rightarrow\RO_\A^\imath$ follows from Section \ref{sec:def}. It is a morphism of $\RO_\A$-modules since the coideal structure $\Delta:\U^\imath\rightarrow \U^\imath\otimes\U$ is the restriction of the coproduct on $\U$. It is a morphism of coalgebras since $\U^\imath$ is a subalgebra of $\U$. 
\end{proof}

\begin{remarks}
    The subspace $\RO_\A^\imath$ is not a Hopf algebra in general. This reflects the fact that $K_\mathbb{C}$ is not a Poisson subgroup of $G_{\mathbb{C}}$ in general, that is, the embedding $K_\mathbb{C}\rightarrow G_\mathbb{C}$ is not Poisson. It is known that the symmetric subgroup $K_{\mathbb{C}}$ is reductive. Therefore one can apply Lusztig's construction to obtain a Hopf algebra $\RO_{K,\A}$. However, since in general $K_\mathbb{C}$ is not a Poisson subgroup one cannot expect a quotient map $\RO_\A\rightarrow\RO_{K,\A}$ on the quantum level. Hence in order to construct the symmetric subgroup schemes, the consideration of quantum symmetric pairs is essential.
\end{remarks}

\end{document}